\documentclass{article}
\usepackage{amsfonts,amsmath,amssymb,amsthm,color}

\addtolength{\textwidth}{2.2cm} \addtolength{\oddsidemargin}{-1.1cm}
\addtolength{\evensidemargin}{-1.1cm}
\addtolength{\topmargin}{-.75cm}

\bibliographystyle{plain}

\def\R{{\mathbb R}}
\def\N{{\mathbb N}}

\def\diver{{\rm div}}
\def\divg{\diver_\gamma}

\def\<{\langle}
\def\>{\rangle}
\def\eps{\varepsilon}

\def\phi{\varphi}
\def\1{{\bf 1}}

\newcommand\scal[2]{{\left\langle #1 ,#2\right\rangle}}
\newcommand{\essinf}{\operatorname{ess\, inf}}
\newcommand{\esssup}{\operatorname{ess\, sup}}
\newtheorem{thm}{Theorem}

\newtheorem{prop}[thm]{Proposition}
\newtheorem{cor}[thm]{Corollary}
\newtheorem{lemma}[thm]{Lemma}
\newtheorem{rmk1}[thm]{Remark}

\title{Perimeter of sublevel sets in infinite dimensional spaces}

\author{Vicent Caselles\thanks{
Departament de Tecnologies de la Informaci\'o i les Comunicacions, Universitat Pompeu--Fabra, C/Roc Boronat 138,
08018 Barcelona, Spain,
e--mail: vicent.caselles@upf.edu
},
Alessandra Lunardi \thanks{
Dipartimento di Matematica, University of Parma, Parco Area delle Scienze 53/a, 43124 Parma, Italy,
e--mail: alessandra.lunardi@unipr.it
},
Michele Miranda jr \thanks{
Dipartimento di Matematica, University of Ferrara, via Machiavelli 35, 44121 Ferrara, Italy,
e--mail: michele.miranda@unife.it
}, Matteo Novaga\thanks{
Dipartimento di Matematica, University of Padova, via Trieste 63, 35121 Padova, Italy,
e--mail: novaga@math.unipd.it
}}

\date{}

\begin{document}
\maketitle

\begin{abstract}
\noindent We compare the perimeter measure with the Airault-Malliavin surface measure
and we prove that all open convex subsets of abstract Wiener spaces 
have finite perimeter. By an explicit counter--example, we show that in general this 
is not true for compact convex domains.
\end{abstract}

\section{Introduction}\label{secIntro}

In the setting of abstract Wiener spaces and  Malliavin calculus, the definition of 
set with finite perimeter and function of bounded variation has been first given
by Fukushima and Hino in \cite{fuk2000}, \cite{Fuk01OnT}.
Recently there has been an increasing interest in the study of geometric properties
of   sets with finite perimeter and, in particular, in the structure of the perimeter
measure. We mention for instance the paper by Hino \cite{Hin09}, where
the author provides a notion of cylindrical essential boundary and a representation
of the perimeter measure by means of a codimension one Hausdorff measure, introduced
by Feyel and de la Pradelle in \cite{feydelpra}.
In the papers \cite{AmbManMirPal09BVF,AMP} Ambrosio et al. give a new version of these results,
together with the Sobolev rectifiability of the essential boundary, that is the fact that
the essential boundary is contained, up to negligible sets, in a countable union of graphs
of Sobolev functions defined on hyperplanes. The question whether or not the rectifiability 
result can be extended, as in the Euclidean case, to Lipschitz functions is still an open question.

In this paper we address some questions about perimeters of good sets. First, we compare
the perimeter measure with the surface measure introduced by Airault and Malliavin in \cite{AM},
 showing that for suitably smooth sets such notions coincide.
We establish the equality  
\begin{equation}
\label{per-div}
P_{\gamma}(\{ u <r\} ) = \int_{\{u<r\}}   {\rm div}_\gamma \bigg(\frac{ \nabla_{H}u }{|\nabla_{H}u|_H} \bigg)\, d\gamma , \quad r\in \R,
\end{equation}
for a wide class of real valued functions $u$. Here $P_{\gamma}$ and $  {\rm div}_\gamma$ denote the perimeter and the divergence with respect to the Gaussian measure $\gamma$, and $H$ is the relevant Cameron--Martin space, see Sect. 2 for precise definitions. Formula \eqref{per-div} has several consequences, such as continuity and boundedness of $r\mapsto P_{\gamma}(\{ u <r\} )$. 

Then, we investigate the question whether or not a convex set  has finite perimeter. In Proposition 
\ref{propen} we show that all open convex sets have finite perimeter.
On the other hand, in Proposition \ref{PropConvC} we prove that in any infinite dimensional 
Hilbert space  with a non--degenerate Gaussian measure 
there exists a closed convex set (a Hilbert cube) with infinite perimeter.
Such a convex set is compact under a mild condition on the covariance operator.

In the case of balls related results may be found in the papers \cite{HSD,hertle,linde},
where the notion of perimeter is replaced by the density (with respect to the Lebesgue measure) of 
the image measure of $\gamma$ under $\|\cdot - x_0\|$, and that contain  further discussions of other 
aspects of Gaussian measures of balls. In a Hilbert space, taking $u(x) = \|x-x_0\|^2$, \eqref{per-div} gives 
a simple explicit formula for the the perimeter of any ball.

\section{Notation and preliminary results}\label{secnot}

We consider an abstract Wiener space $(X,\gamma,H)$,   where
$X$ is a separable Banach space, endowed with the norm $\|\cdot\|_X$,
$\gamma$ is a non--degenerate centered Gaussian measure,
and $H$ is the Cameron--Martin space associated to the measure $\gamma$. 

Let us recall the definition and properties of $H$ that will be used in the sequel. 
By Fernique's Theorem  (e.g., \cite[Theorem 2.8.5]{Bog98Gau}),
 there exists a positive number $\beta>0$ such that
\[
 \int_X e^{\beta\|x\|^2}d\gamma(x)<+\infty.
\]
This implies that the dual space $X^*$ is contained in $L^2(X, \gamma)$. The closure ${\cal H}$ of $X^*$  in 
$L^2(X, \gamma)$ is called {\it reproducing kernel}, and $H$ is the range of the one to one operator $R: {\cal H}\to X$
defined by
\[
Rf := \int_X f(x)x \,d\gamma(x),
\]
(the latter is a Bochner integral). 
$H$ is endowed with the inner product
$[\cdot, \cdot ]_H$ and  the associated norm $|\cdot |_H$ induced by $L^2(X, \gamma)$ through $R$. So, 
$h\in H$ if and only if there is $\hat{h}\in  {\cal H}$ such that 
$$
\int_X \hat{h}(x) \scal{x}{x^*} d\gamma(x) = \scal{h}{x^*}, \qquad \forall x^*\in X^*,
$$ 
(here $\scal{\cdot}{\cdot}$ denotes the duality between $X$ and $X^*$),
and in this case $|h |_H = \|\hat{h}\|_{L^2(X, \gamma)}$.

The continuity of $R$ implies that the embedding of $H$ in $X$ is continuous, that is, there exists $c_H>0$
such that
\begin{equation}\label{contEmb}
\|h\|_X \leq c_H|h|_H,\qquad \forall h\in H.
\end{equation}
Moreover, $H$ is separable and it is densely embedded in $X$;  there exists a 
sequence $(x^*_j)$ in $X^*$, such  that the elements $h_j := Rx_j^*$, $j\in \N$,  form an orthonormal basis
of $H$. We then define $\lambda_j:=\|x^*_j\|_{X^*}^{-2}$.  We shall consider an ordering of  the 
vectors $x^*_j$ such  that the   sequence $(\lambda_j)$ is non increasing.

Given $n\in\mathbb N$, we denote by  $H_n$ the linear span of $h_1,\ldots, h_n$, 
and by $\Pi_n: X\to H_n$   the  projection
\[
\Pi_n(x) := \sum_{j=1}^n \scal{x}{x^*_j}\, h_j,  \qquad x\in X.
\]
The map $\Pi_n$ induces the decomposition $\gamma=\gamma_n\otimes\gamma_n^\perp$,
with $\gamma_n$ and $\gamma_n^\perp$ Gaussian measures having $H_n$ and $H_n^\perp$
as Cameron--Martin spaces.

The covariance operator   $Q$ is the restriction of $R$ to $X^*$. 
In the case that $X$ is a Hilbert space, after the canonical identification of $X$ and $X^*$, $Q$ is a 
bounded symmetric trace class operator in $X$ and 
the constant $c_H$ is related to the largest  eigenvalue of  $Q$, $c_H = \lambda_{1}^{1/2}$. The numbers 
$\lambda_j$ considered  above are precisely the eigenvalues of $Q$. 

The measure $\gamma$ is absolutely continuous with respect
to translations along Cameron--Martin directions; more precisely, for $h\in H$, $h=Rx^*$,
the measure
$\gamma_h(B)=\gamma(B-h)$ is absolutely continuous with respect to $\gamma$ and
\begin{equation}\label{absCont}
d\gamma_h(x)=\exp\left(
\scal{x}{x^*}-\frac{1}{2}|h|_H^2
\right)d\gamma(x) .
\end{equation}

For any function $f:X\mapsto \R$ differentiable at a point $x\in X$,  the derivative  $f'(x)$  is an 
element of $X^*$, hence its restriction to $H$  belongs to $H^*$. The element $y\in H$ such that  
$f'(x)(h) = [y, h]_H$ for each $h\in H$ is denoted by $ \nabla_H f(x)$. It follows that 
$$\nabla_H f(x) =  \sum_{j\in\mathbb N}\partial_jf(x)h_j,$$
where $\partial_j := \partial_{h_j}$ is the directional derivative of $f$ in the direction $h_j$. 

We denote by ${\cal F}C_b^1(X)$ the space 
\begin{align*}
{\cal F}C^1_b(X)=\{ f:X\to \R:&\,\exists m\in \N, \ell_1,\ldots,\ell_m\in X^*, \mbox{ such that}\\ 
&f(x)=\varphi (\scal{x}{\ell_1},\ldots,\scal{x}{\ell_m}),\varphi \in C^1_b(\R^m)\}.
\end{align*}
We also define the space ${\cal F}C^1_b(X,H)$ of cylindrical $H$--valued functions
as the vector space spanned by the functions 
$f\ell$, with $f\in {\cal F}C^1_b(X)$ and $\ell\in H$. 
For functions  $\varphi\in {\cal F}C^1_b(X,H)$,
$$
\varphi(x)=\sum_{i=1}^n f_i(x)\ell_i,  $$ 
with $f_i\in {\cal F}C^1_b(X)$, $\ell_i\in H$,
the divergence is defined as 
\begin{equation}\label{defOfDivergence}
\divg \varphi(x) =   \sum_{j\geq 1}  \partial_j^*[\varphi(x), h_j]_H
=\sum_{j\geq 1} \sum_{i=1}^n \partial_j^* f_i(x) [\ell_i,h_j]_H,
\end{equation}
where $\partial_j^*f(x) := \partial_jf(x) - \hat h_j(x)f(x)$; this divergence operator is, up
to the sign, the formal adjoint in $L^2(X,\gamma)$ of the gradient $\nabla_H$. In fact formula \eqref{defOfDivergence} may be extended to all vector fields $\varphi \in W^{1,p}(X, \gamma; H)$
and $\divg $ is a bounded operator from $W^{1,p}(X, \gamma; H)$ to $L^p(X, \gamma)$ for every $p\in (1, +\infty)$ 
\cite[Prop. 5.8.8]{Bog98Gau}. 

With these notations, the following integration by parts formula holds:
\begin{equation}\label{inp}
\int_X f\, \divg \phi\,d\gamma = -\int_X [\nabla_H f,\phi]_H\, d\gamma,
\quad \forall f\in {\cal F}C^1_b(X), \phi\in {\cal F}C^1_b(X,H).
\end{equation}
Moreover, if a function $u$ belongs to the Orlicz space  $L\log^{1/2}L(X, \gamma)$, then 
$u\, \divg  \phi\in L^1(X, \gamma)$ for each $ \phi\in {\cal F}C^1_b(X,H)$.

Following \cite{Fuk01OnT} and \cite{AmbManMirPal09BVF}, we
define the $\gamma$--total variation of a function $u\in L\log^{1/2}L(X, \gamma)$ as
\begin{equation}\label{defDu}
|D_\gamma u|(X) :=
\sup \left\{\int_X u(x)\divg \phi(x) d\gamma(x): \phi\in
{\cal F}C^1_b(X,H):
|\phi(x)|_H\leq 1
\right\}.
\end{equation}
We say that $u$ has finite $\gamma$--total variation, $u\in
BV(X,\gamma)$, if $|D_\gamma u|(X)<+\infty$. 
A measurable subset $E\subseteq X$ is said to have $\gamma$--finite
perimeter if $P_\gamma(E):=|D_\gamma \chi_E|(X)<+\infty$. The perimeter is lower semicontinuous
with respect to the $L^1$--convergence, in the sense that if $(E_n)$ is a sequence of sets with 
finite perimeter such that $\chi_{E_n}$ converges to $\chi_E$ in $L^1(X,\gamma)$, then  
$P_\gamma(E)\leq \lim\inf_{n\to \infty} P_\gamma(E_n)$.

To any function $u\in BV(X,\gamma)$ an $H$--valued  measure $D_\gamma u$ is associated. Thanks to the Radon--Nikodym theorem,  
the polar decomposition $D_\gamma u=\sigma_u|D_\gamma u|$ holds, where
$|D_\gamma u|$ is the total variation measure and $\sigma_u:X\to H$ is a $|D_\gamma u|$--measurable
function with $|\sigma_u(x)|_H=1$ $|D_\gamma u|$--a.e. $x\in X$. In the case $u=\chi_E$, we shall write
$D_\gamma \chi_E=\sigma_E |D_\gamma \chi_E|$.
For functions with bounded variation we have the following integration by parts formula, 
\begin{equation}\label{eqpappa}
\int_X u\,\divg \phi \,d\gamma = -\int_X [\phi,\sigma_u]_H d|D_\gamma u|,
\qquad \forall\phi\in {\cal F}C^1_b(X,H).
\end{equation}
In particular,  if $u\in W^{1,1}(X,\gamma)$,
then $u\in BV(X,\gamma)$, the total variation measure is absolutely
continuous with respect to the Gaussian measure, and 
$D_\gamma u=\nabla_H u \,\gamma$.

An important tool  
is the following coarea formula (\cite{fuk2000}, see also \cite{Led98}).
 
\begin{prop}\label{coarea}
Let $u\in BV(X,\gamma)$; then almost all the level sets
$\{u<r\}$ have finite perimeter and
the following  equality holds, 
\begin{equation}\label{eqcoarea}
|D_\gamma u|(X) = \int_\R P_\gamma\left(\{ u<r\}\right) \,dr.
\end{equation}
\end{prop}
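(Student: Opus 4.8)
The plan is to establish the two inequalities $\int_{\R}P_\gamma(\{u<r\})\,dr\le|D_\gamma u|(X)$ and $|D_\gamma u|(X)\le\int_{\R}P_\gamma(\{u<r\})\,dr$ separately, following the scheme of the Euclidean Fleming--Rishel argument; the first of them, once proved, immediately gives $P_\gamma(\{u<r\})<+\infty$ for a.e.\ $r$, since its left-hand side is finite. Two elementary facts will be used repeatedly. First, $P_\gamma(E)=P_\gamma(X\setminus E)$ for every measurable $E$: testing \eqref{inp} with $f\equiv1$ gives $\int_X\divg\phi\,d\gamma=0$ for every $\phi\in{\cal F}C^1_b(X,H)$, so $\int_X\chi_{X\setminus E}\divg\phi\,d\gamma=-\int_X\chi_E\divg\phi\,d\gamma$, and taking the supremum over $|\phi|_H\le1$ (a constraint invariant under $\phi\mapsto-\phi$) proves the claim. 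Second, the image measure $u_\#\gamma$ on $\R$ is finite, hence has at most countably many atoms, so $\gamma(\{u=r\})=0$ for a.e.\ $r$; thus for a.e.\ $r$ the sets $\{u<r\}$, $\{u\le r\}$, $\{u>r\}$, $\{u\ge r\}$ pairwise coincide up to $\gamma$--negligible sets, and the perimeter does not change under modifications on $\gamma$--null sets.

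For the inequality $|D_\gamma u|(X)\le\int_{\R}P_\gamma(\{u<r\})\,dr$ I would use the layer--cake identity $u=\int_0^{\infty}\chi_{\{u>r\}}\,dr-\int_{-\infty}^{0}\chi_{\{u\le r\}}\,dr$. Since $u$, hence also $u^{\pm}$, belongs to $L\log^{1/2}L(X,\gamma)$, the products $u^{\pm}\divg\phi$ are integrable and Fubini's theorem applies. Thus for every $\phi\in{\cal F}C^1_b(X,H)$ with $|\phi|_H\le1$ one writes $\int_X u\,\divg\phi\,d\gamma$ as $\int_0^{\infty}(\int_X\chi_{\{u>r\}}\divg\phi\,d\gamma)\,dr-\int_{-\infty}^{0}(\int_X\chi_{\{u\le r\}}\divg\phi\,d\gamma)\,dr$, and bounds each inner integral by $P_\gamma(\{u>r\})$, resp.\ $P_\gamma(\{u\le r\})$, using \eqref{eqpappa} when the level set has finite perimeter and trivially otherwise. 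By the two facts above both bounds equal $P_\gamma(\{u<r\})$ for a.e.\ $r$, so $\int_X u\,\divg\phi\,d\gamma\le\int_{\R}P_\gamma(\{u<r\})\,dr$; the supremum over $\phi$ then gives the desired inequality.

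For the inequality $\int_{\R}P_\gamma(\{u<r\})\,dr\le|D_\gamma u|(X)$ the plan is approximation plus lower semicontinuity. First I would prove the coarea formula \emph{with equality} for cylindrical $v\in{\cal F}C^1_b(X)$: after orthonormalising in the reproducing kernel one may assume $v(x)=\psi(\scal{x}{x^*_1},\dots,\scal{x}{x^*_m})$ with $\psi\in C^1_b(\R^m)$, and using $\gamma=\gamma_m\otimes\gamma_m^{\perp}$ one gets $\int_X|\nabla_H v|_H\,d\gamma=\int_{\R^m}|\nabla\psi|\,d\gamma_m$, while each level set $\{v<r\}$ is a cylinder over $\{\psi<r\}\subseteq\R^m$ whose $\gamma$--perimeter equals the finite--dimensional Gaussian perimeter $P_{\gamma_m}(\{\psi<r\})$. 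The classical (weighted) coarea formula in $\R^m$, applied with the smooth positive Gaussian density, yields $\int_{\R^m}|\nabla\psi|\,d\gamma_m=\int_{\R}P_{\gamma_m}(\{\psi<r\})\,dr$, hence $\int_X|\nabla_H v|_H\,d\gamma=\int_{\R}P_\gamma(\{v<r\})\,dr$. For general $u\in BV(X,\gamma)$ I would invoke the relaxation characterisation of $|D_\gamma u|(X)$ in \cite{Fuk01OnT,AmbManMirPal09BVF} to pick $u_n\in{\cal F}C^1_b(X)$ with $u_n\to u$ in $L^1(X,\gamma)$ and $\int_X|\nabla_H u_n|_H\,d\gamma\to|D_\gamma u|(X)$. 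From the Cavalieri identity $\int_{\R}\|\chi_{\{u_n<r\}}-\chi_{\{u<r\}}\|_{L^1(X,\gamma)}\,dr=\|u_n-u\|_{L^1(X,\gamma)}\to0$ one extracts a subsequence along which $\chi_{\{u_n<r\}}\to\chi_{\{u<r\}}$ in $L^1$ for a.e.\ $r$; lower semicontinuity of the perimeter then gives $P_\gamma(\{u<r\})\le\liminf_n P_\gamma(\{u_n<r\})$ a.e., and Fatou's lemma yields $\int_{\R}P_\gamma(\{u<r\})\,dr\le\liminf_n\int_{\R}P_\gamma(\{u_n<r\})\,dr=\liminf_n\int_X|\nabla_H u_n|_H\,d\gamma=|D_\gamma u|(X)$. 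Combining the two inequalities gives \eqref{eqcoarea}.

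The step I expect to be the main obstacle is the finite--dimensional reduction in the cylindrical case, namely the identification of the $\gamma$--perimeter of a cylinder $C\times H_m^{\perp}$ with $P_{\gamma_m}(C)$: this amounts to showing that in the supremum defining $P_\gamma$ it suffices to use test fields that are $H_m$--valued and depend only on the first $m$ coordinates, which is obtained by integrating the remaining variables out against $\gamma_m^{\perp}$ (see \cite{AmbManMirPal09BVF,feydelpra}). A secondary point is to ensure that the approximating sequence $u_n$ can be taken simultaneously cylindrical (so that the equality case just proved applies to it) and strictly convergent in total variation (so that $\liminf_n\int_X|\nabla_H u_n|_H\,d\gamma$ equals, and not merely dominates, $|D_\gamma u|(X)$); both properties follow from the definition of $BV(X,\gamma)$ adopted in the references.
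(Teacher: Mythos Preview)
The paper does not prove this proposition: it is stated as a known tool and attributed to \cite{fuk2000} (see also \cite{Led98}), so there is no ``paper's own proof'' to compare against. Your argument is a correct and essentially complete reconstruction of the standard Fleming--Rishel proof adapted to the Wiener space setting, and it is in the spirit of what one finds in those references.

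A couple of minor remarks. In the first inequality you write ``using \eqref{eqpappa} when the level set has finite perimeter''; in fact the bound $\int_X\chi_{\{u>r\}}\divg\phi\,d\gamma\le P_\gamma(\{u>r\})$ is immediate from the very definition \eqref{defDu} of the perimeter as a supremum, without invoking \eqref{eqpappa}. Second, the two points you flag as potential obstacles are indeed the places where one has to appeal to the existing literature rather than to arguments internal to this paper: the identification $P_\gamma(C\times \ker\Pi_m)=P_{\gamma_m}(C)$ for cylinders and the strict approximation of $u\in BV(X,\gamma)$ by cylindrical $C^1_b$ functions are both established in \cite{fuk2000,Fuk01OnT,AmbManMirPal09BVF}, exactly as you cite. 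With those inputs granted, your chain Fatou $+$ lower semicontinuity $+$ finite--dimensional coarea closes the second inequality, and the layer--cake argument closes the first.
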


\section{Comparison with the Malliavin surface measure}

We now compare the notion of perimeter with the surface measure 
introduced by Airault and Malliavin in \cite{AM} in the case of suitably smooth hypersurfaces. 
We refer to    \cite{AM} and to \cite[Sect. 6.9, 6.10]{Bog98Gau} for its construction and properties. 

The smooth surfaces under consideration are level surfaces of functions 
$$
u\in W^{\infty}(X, \gamma) :=
\bigcap_{p>1,\, k\in \N} W^{k,p}(X,\gamma)
$$ 
such that $\frac{1}{|\nabla_{H}
u|_H} \in \bigcap_{p > 1} L^p(X,\gamma)$. For such functions, the image measure $\gamma \circ u
^{-1}$ defined in ${\mathcal B}(\R)$ by $\gamma \circ u^{-1}(I) = \gamma (u^{-1}(I))$ has a smooth density $k$ with 
respect to the Lebesgue measure (e.g.,  \cite[Theorem 6.9.2]{Bog98Gau}). 
For each $r$ such that $k(r)>0$,   a  Radon measure  $\sigma_r$ supported on $u^{-1}(r)$ is well defined, and  we have
\begin{equation}\label{eqpart}
\int_{\{u< r\}} \divg v \,d\gamma = \int_{\{u =  r\}}\frac{[v,\nabla_{H}u]_H}{|\nabla_{H}u|_H}\, d\sigma_r,
\end{equation}
for all $v\in W^{\infty}(X, \gamma)$ such that $[v,\nabla_{H}u]_H$ is continuous. 

Since $k$ is continuous, for all $r\in\R$ we have
\begin{equation}\label{eqr}
k(r) = \lim_{\eps \to 0}\frac{1}{2\eps} \int_{r-\eps}^{r+\eps} k(s)\,ds =  
\lim_{\eps \to 0}\frac{\gamma\left(\{x: r-\eps \leq u(x) \leq r+\eps\}
\right)}{2\eps}.
\end{equation}
In particular, $\gamma(\{u
^{-1}(r)\})=0$ for all $r\in\R$. 
 
\begin{prop}\label{am}
Let $u
:X\to \R$ satisfy
\begin{equation}\label{eqF}
u
\in C(X)\cap \bigcap_{p>1,\, k\in \N} W^{k,p}(X,\gamma) ,
\qquad \nabla_H  
 u
\in C(X,H), 
\qquad \frac{1}{|\nabla_{H}
u
|_H} \in \bigcap_{p > 1} L^p(X,\gamma). 
\end{equation}
Then, for all $r\in\R$ with $k(r)>0$,  the level set $\{u
< r\}$ has finite 
perimeter and 
\begin{equation}\label{eqps}
P_\gamma(\{u
< r\}) = \sigma_r(u^	{-1}(r))   = \int_{\{u
<r\}}\divg \nu_H
\, d\gamma ,
\end{equation}
where $\nu_H
 := \nabla_H u/|\nabla_H u|_H$. 
\end{prop}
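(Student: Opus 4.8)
The goal is to identify the perimeter $P_\gamma(\{u<r\})$ with the Airault--Malliavin quantity $\sigma_r(u^{-1}(r))$, and to re-express both as the divergence integral $\int_{\{u<r\}}\divg\nu_H\,d\gamma$. The natural route is through the two integration-by-parts formulas available: the one defining the total variation / perimeter, equation \eqref{eqpappa}, and the Airault--Malliavin formula \eqref{eqpart}. First I would check that $\nu_H=\nabla_Hu/|\nabla_Hu|_H$ is a legitimate test object. The hypotheses \eqref{eqF} — that $u\in C(X)\cap W^\infty(X,\gamma)$, $\nabla_Hu\in C(X,H)$, and $1/|\nabla_Hu|_H\in\bigcap_pL^p$ — are tailored so that $\nu_H\in W^{1,p}(X,\gamma;H)$ for every $p$ (the gradient of $u$ is Sobolev, and dividing by $|\nabla_Hu|_H$ is harmless because its reciprocal has all moments), so $\divg\nu_H$ is well defined and lies in $\bigcap_p L^p(X,\gamma)$ by \cite[Prop. 5.8.8]{Bog98Gau}, and moreover $\nu_H$ is continuous with $|\nu_H|_H\equiv1$.

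\medskip\noindent Next I would apply \eqref{eqpart} with the vector field $v=\nu_H$: here $[\nu_H,\nabla_Hu]_H=|\nabla_Hu|_H$ is continuous and positive on $u^{-1}(r)$, so the right-hand side of \eqref{eqpart} collapses to $\sigma_r(u^{-1}(r))$, giving the identity
\[
\int_{\{u<r\}}\divg\nu_H\,d\gamma=\sigma_r(u^{-1}(r)).
\]
This disposes of the second equality in \eqref{eqps} and shows in particular that $\int_{\{u<r\}}\divg\nu_H\,d\gamma$ is finite and nonnegative. The remaining task is to show this common value equals the perimeter. For the inequality $P_\gamma(\{u<r\})\le\sigma_r(u^{-1}(r))$, I would take an arbitrary $\phi\in{\cal F}C^1_b(X,H)$ with $|\phi|_H\le1$ and estimate $\int_{\{u<r\}}\divg\phi\,d\gamma$. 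One approach: approximate $\chi_{\{u<r\}}$ by smooth functions of $u$ and integrate by parts, or — more directly — use \eqref{eqpart} again (it is valid for all $W^\infty$ vector fields with $[\,\cdot\,,\nabla_Hu]_H$ continuous, and one checks $\phi\in{\cal F}C^1_b(X,H)$ qualifies, or approximates one that does) to get $\int_{\{u<r\}}\divg\phi\,d\gamma=\int_{\{u=r\}}[\phi,\nu_H]_H\,d\sigma_r\le\sigma_r(u^{-1}(r))$ since $|[\phi,\nu_H]_H|\le|\phi|_H|\nu_H|_H\le1$. Taking the supremum over $\phi$ yields $P_\gamma(\{u<r\})\le\sigma_r(u^{-1}(r))$, so in particular $\{u<r\}$ has finite perimeter.

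\medskip\noindent For the reverse inequality $P_\gamma(\{u<r\})\ge\sigma_r(u^{-1}(r))$, the idea is to approximate the optimal direction $\nu_H$ in the supremum \eqref{defDu} by cylindrical fields: choose $\phi_k\in{\cal F}C^1_b(X,H)$ with $|\phi_k|_H\le1$ and $\phi_k\to\nu_H$ in $L^p(X,\gamma;H)$ for a suitable $p$ (such an approximation exists by density of ${\cal F}C^1_b(X,H)$ in $W^{1,p}$, truncating norms by $1$ which does not increase them). Then $\int_{\{u<r\}}\divg\phi_k\,d\gamma\to\int_{\{u<r\}}\divg\nu_H\,d\gamma$ — here one needs $\divg\phi_k\to\divg\nu_H$ in $L^1(\{u<r\})$ or uses \eqref{eqpappa} written for $\chi_{\{u<r\}}$ after establishing it has finite perimeter, rewriting $\int\divg\phi_k$ as $-\int[\phi_k,\sigma_{\{u<r\}}]_Hd|D_\gamma\chi_{\{u<r\}}|$ and passing to the limit. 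Combined with the upper bound this forces $\sigma_{\{u<r\}}=\nu_H$ $|D_\gamma\chi_{\{u<r\}}|$-a.e. and $P_\gamma(\{u<r\})=\int_{\{u<r\}}\divg\nu_H\,d\gamma=\sigma_r(u^{-1}(r))$.

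\medskip\noindent\textbf{Main obstacle.} The delicate point is the convergence $\int_{\{u<r\}}\divg\phi_k\,d\gamma\to\int_{\{u<r\}}\divg\nu_H\,d\gamma$ with $|\phi_k|_H\le1$ enforced: one must produce cylindrical approximations that simultaneously respect the pointwise norm bound and converge in a norm strong enough to pass divergences to the limit on the set $\{u<r\}$ — i.e. controlling $\divg\phi_k$ in $L^1$, not merely $\phi_k$ in $L^p$. A clean way around this is to observe that \eqref{eqpart} already holds for $v=\nu_H$ and for sufficiently regular $\phi$, and to feed approximations through \eqref{eqpart} (where the boundary integral only sees $|\phi|_H\le1$) rather than through raw divergence convergence; the continuity hypotheses in \eqref{eqF} are exactly what makes the relevant boundary integrands continuous so that $\sigma_r$ is testable against them. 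Verifying the admissibility of the test fields in \eqref{eqpart} and that the supremum in \eqref{defDu} is attained in the limit by $\nu_H$ is where the real work lies.
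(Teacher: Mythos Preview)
Your overall architecture matches the paper's: get $\sigma_r(u^{-1}(r))=\int_{\{u<r\}}\divg\nu_H\,d\gamma$ from \eqref{eqpart}, then sandwich the perimeter between these. Two points deserve comment.

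\medskip
\textbf{On the second equality.} You apply \eqref{eqpart} directly to $v=\nu_H$, but as stated \eqref{eqpart} requires $v\in W^\infty$; you only argue $\nu_H\in W^{1,p}$. The paper sidesteps this by the $\eps$-regularization $v_\eps=\nabla_Hu/(\eps+|\nabla_Hu|_H^2)^{1/2}$, which \emph{is} in $\bigcap_{k,p}W^{k,p}$ since the denominator is a smooth bounded function of $W^\infty$ data, then lets $\eps\to0$ using monotone convergence on the boundary term and dominated convergence (with the explicit $L^p$ majorant $(|\divg\nabla_Hu|+\|\nabla_H^2u\|_{HS})/|\nabla_Hu|_H$) on the bulk term. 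This is a cleaner way to justify that step.

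\medskip
\textbf{On the reverse inequality: here is the real gap.} Your plan is to pick cylindrical $\phi_k$ with $|\phi_k|_H\le1$ converging to $\nu_H$ in $L^p(X,\gamma)$ (or $W^{1,p}$), and to pass to the limit either in $\int_{\{u<r\}}\divg\phi_k\,d\gamma$ or, via \eqref{eqpart}, in $\int_{\{u=r\}}[\phi_k,\nu_H]_H\,d\sigma_r$. The problem is that $\sigma_r$ is carried by the $\gamma$-null set $\{u=r\}$, so $L^p(\gamma)$ or $W^{1,p}(\gamma)$ convergence tells you nothing about convergence $\sigma_r$-a.e.\ or in $L^1(\sigma_r)$; and on the bulk side, norm-truncated approximants need not converge in $W^{1,p}$, so you lose control of $\divg\phi_k$. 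You flag this as the ``main obstacle'' but your proposed workaround (``feed approximations through \eqref{eqpart}'') still needs boundary convergence, which you do not have.

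The paper resolves this differently. From \eqref{eqpappa} and \eqref{eqpart} one has, for every cylindrical $\varphi$,
\[
\int_X[\varphi,\sigma_{\{u<r\}}]_H\,d|D_\gamma\chi_{\{u<r\}}|
=-\int_X[\varphi,\nu_H]_H\,d\sigma_r ,
\]
i.e.\ the two $H$-valued measures $\sigma_{\{u<r\}}\,|D_\gamma\chi_{\{u<r\}}|$ and $-\nu_H\,\sigma_r$ agree on ${\cal F}C_b^\infty(X,H)$. Because both measures are finite on the separable space $X$ they are tight, and ${\cal F}C_b^\infty(X,H)$ is dense in $C(K,H)$ for each compact $K$ (finite-dimensional approximation of the range plus Stone--Weierstrass), so the two measures coincide. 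Taking total variations gives $P_\gamma(\{u<r\})=|D_\gamma\chi_{\{u<r\}}|(X)=\sigma_r(u^{-1}(r))$ directly. The point is that one never needs to approximate $\nu_H$ under the constraint $|\phi_k|_H\le1$; one instead identifies the vector measures and reads off equality of their total masses. The continuity hypotheses $u\in C(X)$, $\nabla_Hu\in C(X,H)$ are used precisely so that this $C(K,H)$ density argument applies.
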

\begin{proof}
For all $\eps>0$ we define
\[v_{\eps}(x) :=  \frac{\nabla_{H}u
(x)}{\left(\eps + |\nabla_{H}u
(x)|_H^2\right)^{1/2}}
\ \in C(X,H)\cap \bigcap_{p > 1,\, k\in \N} W^{k,p}(X,H).
\]
By \eqref{eqpart} we have
\begin{equation}\label{eps}
\int_{\{u
< r\}} \divg v_\eps \,d\gamma = \int_{\{u
=  r\}} 
\frac{|\nabla_{H} u|_H}{(\eps + |\nabla_{H}
u|_H^2)^{1/2}}\, d\sigma_r.
\end{equation}
Letting $\eps \to 0$, by monotone convergence the right hand side goes to  
$$ \int_{\{u
=  r\}} \, d\sigma_r = \sigma_r(u^{-1}(r))\in [0,+\infty].$$
On the other hand, for a.e. $x\in X$ we have
\[
\divg v_\eps(x) =  \frac{\divg \nabla_H u(x)}{(\eps+|\nabla_{H}u|^2_H)^{1/2}} 
+ \frac{[\nabla^2_{H}u(x) \nabla_{H}u(x), \nabla_{H}u
(x)]_H}{(\eps + |\nabla_{H}u(x)|_H^2)^{3/2}} 
\]
so that  $\lim_{\eps\to 0} \divg v_\eps(x) = \divg \nu_H
(x)$ and, denoting by $\|\cdot \|_{HS}$ the Hilbert-Schmidt norm, 
\[
|\divg v_\eps(x)| \le 
\frac{|\divg \nabla_H u(x)|}{|\nabla_{H}u(x)|_H} 
+ \frac{\|\nabla^2_{H}u\|_{HS}}{|\nabla_{H}u(x)|_H}
\in \bigcap_{p > 1} L^p(X,\gamma).
\]
So, the left-hand side of \eqref{eps} goes to 
$$\int_{\{u<r\}}\divg \nu_H\, d\gamma ,$$
which implies that $\sigma_r(u
^{-1}(r))<+\infty$ and that  the second equality in \eqref{eqps} holds. 
Moreover,  by \eqref{eqpart}
\[
\begin{aligned}
P_\gamma(\{u
< r\}) &= \sup_{v\in {\mathcal F
C}^1_b(X,H),\ |v|_H\leq 1}
\int_{\{u
< r\}} \divg v \,d\gamma  
\\
&= \sup_{v\in {\mathcal F
C}^1_b(X,H),\ |v|_H\leq 1}\int_{\{u
=  r\}}
\frac{[v,\nabla_{H}
u
]_H}{|\nabla_{H}
u
|_H}\, d\sigma_r
\\
&\le \sigma_r(u
^{-1}(r))<+\infty.
\end{aligned}\]
In particular, the function $\chi_{\{u < r\}}$ belongs to $BV(X,\gamma)$. We claim that
\begin{equation}\label{olla}
D_\gamma\chi_{\{ u< r\}} = -\frac{\nabla_{H}u}{|\nabla_{H}u|_H}\,\sigma_r .
\end{equation}
In fact, \eqref{eqpappa} and \eqref{eqpart} imply
\begin{equation}\label{eqOnCylFunct}
\int_X [\varphi,\sigma_{ \{u<r\}}]_H d|D_\gamma \chi_{\{u<r\}}|=
-\int_X \frac{[\varphi,\nabla_H u]_H}{|\nabla_H u|}d\sigma_r,
\end{equation}
for any $\varphi\in {\cal F}C^{\infty}_b(X,H)$, the subset of $ {\cal F}C^{1}_b(X,H)$ consisting of smooth functions. We remark that ${\cal F}C^{\infty}_b(X,H)$ is dense in 
$C(K,H)$ for any compact set $K\subset X$. Indeed, for each $u\in C(K,H)$ the range of $u$
  is compact, so that for each $\varepsilon >0$ there is a finite dimensional subspace $Y$ of $H$ such that  $u(K) $ is contained in the $\varepsilon$--neighborhood of $Y$. The Stone--Weierstrass theorem yields the approximation of $\Pi u$, where $\Pi$ is the orthogonal projection on $Y$, and hence the approximation of $u$. 
Moreover, since $X$ is separable and the measures $|D_\gamma\chi_{\{u<r\}}|$
and $\sigma_r$ are finite, then they are tight, so that  
equality \eqref{eqOnCylFunct} can be extended to any $\varphi \in C_b(X,H)$, and the claim follows.  This yields 
\eqref{olla} and hence the first equality in \eqref{eqps}.
\end{proof}

We recall that  the set $\{r\in\R:\,k(r)>0\}$ is connected (\cite{HS}), and  it is dense in the range of $u$ since $u$ is continuous. Then, it contains the interior part of the range of $u$.

Notice that, for each $r$ in the range of $u$, at points $x$ such that $\nabla_H u\neq 0$ the vector   
$\nu_H = \nabla_H u/|\nabla_H u|_H$
is orthogonal to all tangent vectors to the level set $\{u=r\}$ belonging to $H$, with respect to 
the scalar product in $H$. 
Then, it may be considered as the (exterior) unit normal vector to the surface $\{u=r\}$.

Now  we extend a part of Proposition \ref{am} to a wider class of functions $u$. 
 
 \begin{prop}\label{disint1}
Assume that  $u\in BV(X,\gamma)\cap L^{p}(X,\gamma)$ and $z\in W^{1,p'}(X,\gamma;H)$ for some $p\in [1,+\infty)$ satisfy  $|z(x)|_H\leq 1$ for a.e. $x\in X$ and 
$$
| D_\gamma u |(X) = \int_{X} u\, \mathrm{div}_\gamma\, z\,d\gamma.
$$
Then 
\begin{equation}\label{disint2}
P_\gamma(\{u <r\}) = -  \int_{\{u <r\}} \mathrm{div}_\gamma\, z\,d\gamma
\le \Vert \mathrm{div}_\gamma\, z\Vert_{L^1(X,\gamma)}
\end{equation}
for all $r\in\R$. 
\end{prop}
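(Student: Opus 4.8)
The plan is to exploit the hypothesis that $z$ achieves the supremum in the definition of $|D_\gamma u|(X)$, combined with the coarea formula \eqref{eqcoarea}, in order to pin down the perimeter of each sublevel set. First I would note that since $u\in BV(X,\gamma)$, the integration by parts formula \eqref{eqpappa} gives, for every $\phi\in\mathcal FC^1_b(X,H)$, the identity $\int_X u\,\divg\phi\,d\gamma=-\int_X[\phi,\sigma_u]_H\,d|D_\gamma u|$. Because $z\in W^{1,p'}(X,\gamma;H)$ can be approximated by cylindrical fields in the $W^{1,p'}$ norm and $u\in L^p(X,\gamma)$, this extends to $\int_X u\,\divg z\,d\gamma=-\int_X[z,\sigma_u]_H\,d|D_\gamma u|$. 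The assumption $|D_\gamma u|(X)=\int_X u\,\divg z\,d\gamma$ then forces $-\int_X[z,\sigma_u]_H\,d|D_\gamma u|=|D_\gamma u|(X)=\int_X 1\,d|D_\gamma u|$, and since $|[z,\sigma_u]_H|\le|z|_H\,|\sigma_u|_H\le1$ $|D_\gamma u|$-a.e., we conclude $[z,\sigma_u]_H=-1$, i.e. $z=-\sigma_u$ for $|D_\gamma u|$-a.e.\ $x$.

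Next I would apply the same argument at the level of each sublevel set. By Proposition \ref{coarea}, for a.e.\ $r$ the set $\{u<r\}$ has finite perimeter, and $\chi_{\{u<r\}}\in BV(X,\gamma)$ with its own polar decomposition $D_\gamma\chi_{\{u<r\}}=\sigma_{\{u<r\}}|D_\gamma\chi_{\{u<r\}}|$. Testing \eqref{eqpappa} for $\chi_{\{u<r\}}$ against (the extension to) $z$ gives
\[
-\int_{\{u<r\}}\divg z\,d\gamma=\int_X[z,\sigma_{\{u<r\}}]_H\,d|D_\gamma\chi_{\{u<r\}}|\le P_\gamma(\{u<r\}),
\]
using $|z|_H\le1$. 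For the reverse inequality I would integrate this over $r\in\R$: by the coarea formula and Fubini (writing $\int_\R\divg z(x)\chi_{\{u<r\}}(x)\,dr$ appropriately, splitting into the regions $u\ge0$ and $u<0$ as in the Euclidean coarea-type identities for $\int u\,g\,d\gamma=\int_\R(\int_{\{u<r\}}g\,d\gamma)\,dr$ up to a constant), one obtains $\int_\R\big(-\int_{\{u<r\}}\divg z\,d\gamma\big)\,dr=-\int_X u\,\divg z\,d\gamma\cdot(\text{sign bookkeeping})$; done carefully this reads $\int_\R\big(-\int_{\{u<r\}}\divg z\,d\gamma\big)\,dr=|D_\gamma u|(X)=\int_\R P_\gamma(\{u<r\})\,dr$. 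Since the integrand on the left is pointwise $\le$ the integrand on the right, equality of the integrals forces equality a.e.\ in $r$, giving $P_\gamma(\{u<r\})=-\int_{\{u<r\}}\divg z\,d\gamma$ for a.e.\ $r$.

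To upgrade "a.e.\ $r$" to "all $r$", I would use lower semicontinuity of the perimeter under $L^1$ convergence (stated in Section 2) together with continuity properties of $r\mapsto-\int_{\{u<r\}}\divg z\,d\gamma$: as $r_n\uparrow r$, $\chi_{\{u<r_n\}}\to\chi_{\{u<r\}}$ in $L^1(X,\gamma)$ (monotone convergence), so $P_\gamma(\{u<r\})\le\liminf P_\gamma(\{u<r_n\})$; meanwhile $-\int_{\{u<r_n\}}\divg z\,d\gamma\to-\int_{\{u<r\}}\divg z\,d\gamma$ since $\divg z\in L^1(X,\gamma)$ and $\gamma(\{u=r\})$ can only be nonzero for countably many $r$—and at those exceptional atoms one approaches from both sides to sandwich the value. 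Combining with the already-proven inequality $P_\gamma(\{u<r\})\ge-\int_{\{u<r\}}\divg z\,d\gamma$ (valid for all $r$) yields equality for all $r$. The final bound $-\int_{\{u<r\}}\divg z\,d\gamma\le\|\divg z\|_{L^1(X,\gamma)}$ is immediate. The main obstacle I anticipate is the rigorous extension of the integration-by-parts identities from $\mathcal FC^1_b(X,H)$ to the Sobolev field $z\in W^{1,p'}(X,\gamma;H)$ paired against the merely-$L^p$ function $u$ (and against $\chi_{\{u<r\}}$, which is bounded, hence easier); this requires the density of cylindrical fields in $W^{1,p'}$, the continuity of $\divg$ as an operator $W^{1,p'}\to L^{p'}$ quoted from \cite{Bog98Gau}, and a uniform-integrability argument to pass to the limit in $\int_X u\,\divg\phi_n\,d\gamma$.
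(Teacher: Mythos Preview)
Your approach is essentially the paper's: derive the pointwise inequality $-\int_{\{u<r\}}\divg z\,d\gamma\le P_\gamma(\{u<r\})$, match the layer-cake identity against the coarea formula \eqref{eqcoarea} to force equality for a.e.\ $r$, and then upgrade to every $r$ via lower semicontinuity and approximation from below. A few clean-ups: your opening paragraph (showing $z=-\sigma_u$ $|D_\gamma u|$-a.e.) is correct but never used and can be deleted; the ``sign bookkeeping'' you flag in the layer-cake step is exactly the observation $\int_X\divg z\,d\gamma=0$, which the paper states at the outset and which converts $\int_{\{u\ge r\}}\divg z\,d\gamma$ into $-\int_{\{u<r\}}\divg z\,d\gamma$; and your atom discussion in the final step is unnecessary, since choosing $r_n\uparrow r_0$ from the full-measure set where equality already holds gives $\chi_{\{u<r_n\}}\to\chi_{\{u<r_0\}}$ pointwise for \emph{every} $r_0$, with no exceptional values to sandwich.
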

\begin{proof}
Recalling that $\int_X \mathrm{div}_\gamma\, z\,d\gamma=0$,
by the Layer-Cake formula we get
\begin{eqnarray*}
| D_\gamma u |(X) = \int_{X} u \,\mathrm{div}_\gamma\, z\,d\gamma
= - \int_{-\infty}^\infty dr
\int_{\{u < r\}} \mathrm{div}_\gamma\, z\,d\gamma, 
\end{eqnarray*}
while by \eqref{eqcoarea}, 
\begin{eqnarray*}
| D_\gamma u |(X)  
= \int_{-\infty}^\infty P_\gamma(\{u <r\})\, d r .
\end{eqnarray*}
On the other hand, for all $r\in\R$ we have
\begin{equation}\label{ester}
- \int_{\{u < r\}} \mathrm{div}_\gamma\, z\,d\gamma   \leq P_\gamma( \{u<r\}).
\end{equation}
This follows approaching $z$ by  a sequence of vector fields $z_n \in {\mathcal FC}_b^1(X, H)$, with $|z_n|_H\leq 1$, in $W^{1,p'}(X, \gamma)$ if $p>1$, in any $W^{1,q}(X, \gamma)$ if $p=1$, and recalling \eqref{defDu}.  
 
Comparing the integrals that give $| D_\gamma u |(X) $,  for almost every $r\in\R$ we get
\begin{equation}
\label{r}
P_\gamma(\{u <r\}) = - \int_{\{u <r\}} \mathrm{div}_\gamma\, z\,d\gamma .
\end{equation}
Fix now any $r_0\in \R$, and let $(r_n)$ be a sequence of numbers such that  (\ref{r}) holds, $r_n <r_0$, and  $\lim_{n\to \infty}r_n =r$. Then $\lim_{n\to \infty} \chi_{\{u <r_n\}}(x)=  \chi_{\{u <r_0\}}(x)$ for each $x\in X$, so that by dominated convergence $ \lim_{n\to \infty}  \int_{\{u <r_n\}} \mathrm{div}_\gamma\, z\,d\gamma = \int_{\{u <r_0\}} \mathrm{div}_\gamma\, z\,d\gamma$. 
By the lower semicontinuity of $P_\gamma$, we obtain
\begin{equation}\label{aster}
P_\gamma(\{u <r_0\}) \leq 
\liminf_{n\to \infty}  P_\gamma(\{u <r_n\}) = -\liminf_{n\to \infty}  \int_{\{u <r_n\}} \mathrm{div}_\gamma\, z\,d\gamma = -\int_{\{u <r_0\}} \mathrm{div}_\gamma\, z\,d\gamma.
\end{equation}
\eqref{disint2} now follows from \eqref{ester} and \eqref{aster}.
\end{proof}

\begin{cor}
\label{cor:perimetri}
Assume that
$$
u\in W^{1,1}(X,\gamma;H)\cap L^{p}(X,\gamma), \qquad 
\nabla_H
 u\ne 0\ \gamma-{\rm a.e.}, \qquad 
\frac{\nabla_H
 u}{|\nabla_H
 u|_H}\in W^{1,p'}(X,H), 
$$ 
for some $p\in [1,+\infty)$. Then for each $r\in \R$ we have
 \begin{equation}
\label{formula}
P_\gamma(\{u <r\}) =  \int_{\{  u<r \}}  {\rm div}_\gamma \nu_H \, d\gamma ,   
\end{equation}
the function 
$r\mapsto P_\gamma(\{u <r\})$ is continuous in $\R$, and 
$$\lim_{r\downarrow \essinf   u} P_\gamma(\{u <r\}) =0, \quad \lim_{r\uparrow \esssup   u} P_\gamma(\{u <r\}) =0.$$
In particular, $P_\gamma(\{u <r\})$ is bounded by a constant independent of $r$.
\end{cor}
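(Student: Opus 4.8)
The plan is to derive the whole statement from Proposition \ref{disint1}, applied with the specific vector field $z:=-\nu_H=-\nabla_H u/|\nabla_H u|_H$, which is well defined $\gamma$-a.e.\ since $\nabla_H u\neq 0$ a.e. The first step is to check the hypotheses of Proposition \ref{disint1}: $u\in W^{1,1}(X,\gamma)\subseteq BV(X,\gamma)$ and $u\in L^p(X,\gamma)$ by assumption, $z\in W^{1,p'}(X,\gamma;H)$ by assumption, and $|z|_H=1\le 1$ a.e. The only point requiring work is the identity $|D_\gamma u|(X)=\int_X u\,\divg z\,d\gamma$. Since $u\in W^{1,1}(X,\gamma)$ we have $D_\gamma u=\nabla_H u\,\gamma$, hence $|D_\gamma u|(X)=\int_X|\nabla_H u|_H\,d\gamma$, and by \eqref{eqpappa}, $\int_X u\,\divg\phi\,d\gamma=-\int_X[\phi,\nabla_H u]_H\,d\gamma$ for every $\phi\in{\cal F}C^1_b(X,H)$. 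Choosing $\phi=z_n\in{\cal F}C^1_b(X,H)$ with $|z_n|_H\le 1$ and $z_n\to z$ in $W^{1,p'}$ (in every $W^{1,q}$ if $p=1$), as in the proof of Proposition \ref{disint1}, and letting $n\to\infty$, the left-hand side tends to $\int_X u\,\divg z\,d\gamma$ while the right-hand side tends to $-\int_X[z,\nabla_H u]_H\,d\gamma=\int_X[\nu_H,\nabla_H u]_H\,d\gamma$ by dominated convergence (using $|z_n|_H\le 1$ and $\nabla_H u\in L^1$). Since $[\nu_H,\nabla_H u]_H=|\nabla_H u|_H$ a.e., this last integral equals $|D_\gamma u|(X)$, as required.

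Proposition \ref{disint1} then gives, for every $r\in\R$, $P_\gamma(\{u<r\})=-\int_{\{u<r\}}\divg z\,d\gamma=\int_{\{u<r\}}\divg\nu_H\,d\gamma$, which is \eqref{formula}, together with $P_\gamma(\{u<r\})\le\|\divg z\|_{L^1(X,\gamma)}=\|\divg\nu_H\|_{L^1(X,\gamma)}$; the latter is finite because $\nu_H\in W^{1,p'}(X,\gamma;H)$ yields $\divg\nu_H\in L^{p'}(X,\gamma)\subseteq L^1(X,\gamma)$, and it is independent of $r$, which proves the last assertion.

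For continuity and the endpoint limits, the crucial auxiliary fact is that $\gamma(\{u=c\})=0$ for every $c\in\R$. I would prove it from the locality of the Malliavin gradient on level sets: by the chain rule for Lipschitz functions, $(u-c)^\pm\in W^{1,1}(X,\gamma)$ with $\nabla_H(u-c)^+=\chi_{\{u>c\}}\nabla_H u$ and $\nabla_H(u-c)^-=-\chi_{\{u<c\}}\nabla_H u$, hence $\nabla_H u=\nabla_H(u-c)=\chi_{\{u\neq c\}}\nabla_H u$, i.e.\ $\nabla_H u=0$ a.e.\ on $\{u=c\}$; since $\nabla_H u\neq 0$ a.e., necessarily $\gamma(\{u=c\})=0$. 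Consequently, if $r_n\to r_0$ then $\chi_{\{u<r_n\}}\to\chi_{\{u<r_0\}}$ $\gamma$-a.e., so dominated convergence in \eqref{formula}, with dominating function $|\divg\nu_H|\in L^1(X,\gamma)$, gives $P_\gamma(\{u<r_n\})\to P_\gamma(\{u<r_0\})$; this is the claimed continuity. In the same way, as $r\downarrow\essinf u$ one has $\chi_{\{u<r\}}\to\chi_{\{u\le\essinf u\}}=0$ a.e., whence $P_\gamma(\{u<r\})\to 0$, and as $r\uparrow\esssup u$ one has $\chi_{\{u<r\}}\to\chi_{\{u<\esssup u\}}=\1$ a.e., whence $P_\gamma(\{u<r\})\to\int_X\divg\nu_H\,d\gamma=0$, the last integral vanishing because it is the Gaussian integral of a divergence.

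The genuine obstacles are two. The first is the passage to the limit in the integration by parts identity in Step~1: it relies on approximating $z$ by cylindrical fields $z_n$ with $|z_n|_H\le 1$ converging in the relevant Sobolev norm — the same device already used in Proposition \ref{disint1} — the uniform bound $|z_n|_H\le1$ being exactly what allows the $H$-pairing term to pass to the limit with only $L^1$ control of $\nabla_H u$. The second is the locality statement $\nabla_H u=0$ a.e.\ on $\{u=c\}$, which rules out atoms of the image measure $\gamma\circ u^{-1}$ and thereby yields the $\gamma$-a.e.\ convergence of the indicator functions. Everything else is routine dominated-convergence bookkeeping built on Proposition \ref{disint1}.
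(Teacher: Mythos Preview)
Your proposal is correct and follows essentially the same route as the paper: apply Proposition~\ref{disint1} with $z=-\nu_H$ to obtain \eqref{formula} and the uniform bound, then use $\gamma(\{u=c\})=0$ together with dominated convergence in \eqref{formula} for continuity and the endpoint limits. The only cosmetic difference is that the paper invokes \cite[Thm.~9.2.4]{Boga} for the fact that the level sets $\{u=c\}$ are $\gamma$-null, while you supply a self-contained locality argument; your verification of the hypothesis $|D_\gamma u|(X)=\int_X u\,\divg z\,d\gamma$ is also spelled out in more detail than in the paper, which simply asserts it.
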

\begin{proof}
The function $u$ satisfies the assumptions of Proposition \ref{disint1} with  $z= -\nabla_H
 u/|\nabla_H u|_H$, hence formula \eqref{formula}
 holds. 
For any $r_0\in \R$ we have
$$\lim_{r\uparrow r_0} \chi_{\{u<r\}}(x) =  \chi_{\{u<r_0\}}(x), \quad  \lim_{r\downarrow r_0} \chi_{\{u<r\}}(x) =  \chi_{\{u\leq r_0\}}(x), \quad \forall x\in X.$$
Since $u\in W^{1,1}(X, \gamma)$ and $\nabla_Hu\neq 0$ a.e., then $\gamma(\{u=r_0\})=0$   by \cite[Thm. 9.2.4]{Boga}.  So, $\lim_{r\rightarrow r_0} \chi_{\{u<r\}} =  \chi_{\{u<r_0\}}$ a.e., and 
 \eqref{formula} yields that $r\mapsto P_\gamma(\{u <r\})$ is continuous at $r_0$ by dominated convergence. 
In particular, since $ \chi_{\{u<\essinf u\}} =0$ a.e., then $\lim_{r\downarrow \essinf   u} P_\gamma(\{u <r\}) =0$. 
Since $ \chi_{\{u<\esssup u\}} = 1$ a.e., then $ \lim_{r\uparrow \esssup   u} = \int_X  \mathrm{div}_\gamma\,\nu_Hd\gamma =0$, and the statement follows. 

\end{proof}
\subsection{Balls in   Hilbert spaces}\label{Sect:Balls}

Assume now that $X$ is a Hilbert space with the inner product $\scal{\cdot}{\cdot}$ and the induced norm $\|\cdot\|$.  
The Hilbert space $H$ is just $Q^\frac 1 2X$ equipped with the inner product  
$[h,k]_H=\langle Q^{-\frac 1 2}h, Q^{-\frac 1 2}k\rangle$. We choose  a orthonormal basis of 
$X$ consisting of eigenvectors $e_j$ of $Q$, $Qe_j=\lambda_j e_j$ for $j\in \N$. Then setting 
$h_j=e_j/|e_j|_H=\sqrt\lambda_j e_j$, the elements $h_j$, $j\in \N$,  form a orthonormal basis 
of $H$. 
%
%
%
%
%
Fixed any $x_0\in X$ and $r>0$, the the exterior unit normal vector $\nu_H
$ at $x\in \partial B_r(x_0)$ is given by
$$\nu_H
(x) =  \frac{Q (x-x_0)}{|Q (x-x_0)|_H}
=\frac{Q (x-x_0)}{\|Q^{1/2}(x-x_0)\|}.$$
We define the mean curvature at $x\in \partial B_r(x_0)$ as the divergence of $\nu_H
$ at $x$. Since 
$[\nu_H(x),h_j]_H = \scal{x-x_0}{h_j}/\|Q^{1/2}(x-x_0)\|$, we obtain 
\begin{align} 
 \label{kga}\divg \nu_H
 (x)= & 
\sum_{j=1}^\infty \left[ \partial_{h_j} \left( 
\frac{\scal{ x-x_0}{h_j}}{(\sum_{i=1}^{\infty}\scal{x-x_0}{h_i} ^2)^{1/2}}
\right)- 
\frac{\scal{ x }{h_j}}{\lambda_j}\cdot  \frac{ \scal{ x-x_0 }{ h_j}}{\|Q^{1/2}(x-x_0)\|}\right]\\
\nonumber
=&
\sum_{j=1}^\infty \frac{\|h_j\|^2}{\|Q^{1/2}(x-x_0)\|} 
-\sum_{j=1}^\infty
\frac{\scal{ x-x_0 }{h_j}^2\|h_j\|^2}{\|Q^{1/2}(x-x_0)\|^{3}} -
\sum_{j=1}^\infty
\frac{\scal{x-x_0}{e_j}\scal{x}{e_j}}{\|Q^{1/2}(x-x_0)\|}
\\
\nonumber
=& \frac{{\rm Tr}\,Q-r^2-\langle x-x_0,x_0\rangle}{\langle Q(x-x_0),(x-x_0)\rangle^\frac 1 2} -
\frac{\| Q(x-x_0)\|^2}{\langle Q(x-x_0),(x-x_0)\rangle^\frac 3 2}
\end{align}
where 
${\rm Tr}\,Q=\sum_{i\geq 1}\lambda_i<+\infty$ is the trace of the covariance operator $Q$.
By computing $\divg\nu_H
$ at $x=re_j$, we see that the mean curvature 
is  unbounded if $X$ is infinite--dimensional.  

\begin{lemma}\label{fieldballs}
If $X$ is an infinite dimensional Hilbert space, 
the function $u(x)=\|x-x_0\|^2$ satisfies condition \eqref{eqF} 
for all $x_0\in X$.
\end{lemma}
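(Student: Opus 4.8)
The goal is to verify that $u(x) = \|x - x_0\|^2$ meets all three requirements in \eqref{eqF}: that $u$ is continuous and lies in $W^{k,p}(X,\gamma)$ for all $k$ and $p$; that $\nabla_H u$ is continuous as an $H$-valued map; and that $1/|\nabla_H u|_H$ lies in $L^p(X,\gamma)$ for every $p>1$. Continuity of $u$ on $X$ is immediate, since the $X$-norm is continuous. The plan is to treat the three conditions in turn, the last being the only one that requires real work.

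First I would compute the $H$-derivatives explicitly. Writing $u(x) = \sum_j \scal{x-x_0}{e_j}^2$, one gets $\partial_{h_j} u(x) = 2\sqrt{\lambda_j}\,\scal{x-x_0}{e_j}$, so that $\nabla_H u(x) = 2 Q(x-x_0)$ and $|\nabla_H u(x)|_H^2 = 4|Q(x-x_0)|_H^2 = 4\langle Q(x-x_0), x-x_0\rangle$. The second $H$-derivative is the constant Hilbert--Schmidt operator $\nabla_H^2 u = 2Q|_H$ (which is trace class, since $Q$ is), and all higher derivatives vanish. For the Sobolev integrability, note that $u$ is a quadratic form in the variables $\scal{x}{e_j}$, which are jointly Gaussian under $\gamma$; by Fernique's theorem and the explicit formulas for the derivatives, $u$, $\nabla_H u$ and $\nabla_H^2 u$ all have Gaussian or sub-Gaussian tails in $X$, $H$ and the Hilbert--Schmidt norm respectively, hence belong to every $L^p(X,\gamma)$. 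This gives $u \in W^{\infty}(X,\gamma)$. Continuity of $x \mapsto \nabla_H u(x) = 2Q(x-x_0)$ from $X$ to $H$ follows because $Q = R|_{X^*}$ maps $X$ (after identification) boundedly into $H$; equivalently, $Q^{1/2}$ maps $X$ boundedly into $X$ and $Q(x-x_0) = Q^{1/2}(Q^{1/2}(x-x_0)) \in Q^{1/2}X = H$, with $|Q(x-x_0)|_H = \|Q^{1/2}(x-x_0)\| \le \|Q^{1/2}\|\,\|x-x_0\|$, so the map is in fact Lipschitz.

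The main obstacle is the condition $1/|\nabla_H u|_H \in \bigcap_{p>1} L^p(X,\gamma)$, i.e.\ that $\langle Q(x-x_0), x-x_0\rangle^{-1/2}$ has all moments. The quantity $\langle Q(x-x_0), x-x_0\rangle = \sum_j \lambda_j \scal{x-x_0}{e_j}^2$ is a weighted sum of shifted squared Gaussians, and the issue is that it could be small with non-negligible probability only if many coordinates are simultaneously near their shifted means — but the smallest eigenvalues $\lambda_j$ decay, so one must control the competition between smallness of coordinates and smallness of weights. The cleanest route is: fix the first coordinate index, say $j=1$ with largest eigenvalue $\lambda_1$; then $\langle Q(x-x_0), x-x_0\rangle \ge \lambda_1 \scal{x-x_0}{e_1}^2$, and $\scal{x-x_0}{e_1}$ is a (shifted) one-dimensional Gaussian with positive variance $\lambda_1$. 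Hence $\langle Q(x-x_0),x-x_0\rangle^{-1/2} \le \lambda_1^{-1/2} |\scal{x-x_0}{e_1}|^{-1}$, and since a one-dimensional non-degenerate Gaussian density is bounded near any point, $|\scal{x-x_0}{e_1}|^{-1} \in L^p(\R, \text{Gaussian})$ for every $p < \infty$ (the singularity $|t - a|^{-p}$ is integrable against a bounded density iff $p < 1$... ). This last parenthetical is wrong, so I would instead use two coordinates, or better: bound $\langle Q(x-x_0),x-x_0\rangle \ge \lambda_1 \big(\scal{x-x_0}{e_1}^2 + (\lambda_2/\lambda_1)\scal{x-x_0}{e_2}^2\big) \ge \min(\lambda_1,\lambda_2)\,(\scal{x-x_0}{e_1}^2+\scal{x-x_0}{e_2}^2)$; the two-dimensional Gaussian density is bounded, and $\int_{\R^2} (s^2+t^2)^{-p/2}\,ds\,dt < \infty$ near the origin precisely when $p < 2$, which still does not cover all $p$. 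The correct argument uses that the joint law of $(\scal{x-x_0}{e_j})_{j=1}^N$ has a bounded density on $\R^N$ for every $N$ (product of one-dimensional Gaussians after the appropriate affine change of variables), so $\langle Q(x-x_0),x-x_0\rangle^{-1/2} \le c_N \big(\sum_{j=1}^N \scal{x-x_0}{e_j}^2\big)^{-1/2}$ and $\int_{\R^N}(\sum t_j^2)^{-p/2}\,dt < \infty$ near $0$ whenever $p < N$; letting $N \to \infty$ covers every finite $p$. I would present this last step carefully, as it is the crux; the rest is bookkeeping with Gaussian moments.
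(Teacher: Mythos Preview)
Your final argument is correct and matches the paper's approach exactly: bound $\langle Q(x-x_0),x-x_0\rangle \ge \lambda_n \sum_{k=1}^n \langle x-x_0,e_k\rangle^2$, use that the first $n$ coordinates have a bounded joint density under $\gamma_n$, and observe that $|y|^{-p}$ is locally integrable on $\R^n$ whenever $n>p$. The paper presents this directly without the one- and two-coordinate false starts; you may want to drop those from the write-up, but the substance is the same (and your explicit verification that $x\mapsto 2Q(x-x_0)$ is Lipschitz from $X$ into $H$ is a detail the paper leaves implicit).
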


\begin{proof}
Being $X$ a Hilbert space, $u\in C^\infty(X,\R)$. Since $\nabla_Hu(x) = 2Q(x-x_0)$,  $u\in W^{k,p}(X, \gamma)$ for each $p>1$, $k\in \N$. Therefore it is enough to show that
\begin{equation}\label{curvature1}
\left\|\frac{1}{|\nabla_H
u|_H}\right\|_{L^p(X,\gamma)} = 
\frac{1}{2}\left\|\langle  Q(\cdot -x_0),\cdot -x_0\rangle^{-\frac 1 2}\right\|_{L^p(X,\gamma)} < +\infty
\qquad \forall p> 1.
\end{equation}
For all $x\in X$ and $n\in\mathbb N$ we have
\[
\left|\frac{1}{\langle Q(x-x_0),(x-x_0)\rangle}\right|^\frac p 2 \leq
\left|\frac{1}{\sum_{k=1}^n \lambda_k\langle x-x_0,e_k\rangle^2}\right|^\frac p 2
\leq \left|\frac{1}{\lambda_n\,\sum_{k=1}^n \langle x-x_0,e_k\rangle^2}\right|^\frac p 2.
\]
Therefore
\[
\left\|\langle Q(\cdot -x_0),\cdot -x_0\rangle^{-\frac 1 2}\right\|_{L^p(X,\gamma)}
\le \lambda_n^{-\frac{p}{2}}\,\int_{H_n}\left|\sum_{k=1}^n
\langle x-x_0,e_k\rangle^2\right|^{-\frac{p}{2}}\,d\gamma_n(x)
<+\infty
\]
for all $n>p$, which gives \eqref{curvature1}.
\end{proof}

Using Corollary \ref{cor:perimetri} we get several properties of perimeters of balls. 

\begin{cor}
\label{corfinal}
Every ball in $X$ has finite perimeter; for each $x_0\in X$ the function $r\mapsto p(r):=P_\gamma(B_r(x_0))$ is continuous in $[0,+\infty)$ and 
$$
\lim_{r\to 0} P_\gamma(B_r(x_0))= \lim_{r\to +\infty} P_\gamma(B_r(x_0))= 0.
$$
Moreover,
there exist $0<r_m<r_M$ (depending on $x_0$) such that
$p$ is monotone increasing in $[0,r_m]$ and monotone decreasing
in $[r_M,+\infty)$.
\end{cor}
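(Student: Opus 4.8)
The strategy is to apply Corollary \ref{cor:perimetri} with the choice $u(x) = \|x-x_0\|^2$, whose sublevel sets $\{u<r^2\}$ are exactly the balls $B_r(x_0)$ (for $r>0$), and then extract the extra monotonicity information from the explicit sign of $\mathrm{div}_\gamma\,\nu_H$ computed in \eqref{kga}. First I would verify that $u$ meets the hypotheses of Corollary \ref{cor:perimetri}: by Lemma \ref{fieldballs}, $u$ satisfies \eqref{eqF}, so $u \in W^{1,1}(X,\gamma;H)$, $\nabla_H u = 2Q(\cdot-x_0) \ne 0$ a.e. (since $Q$ is injective and $\gamma(\{x_0\})=0$), and moreover $\nu_H = Q(\cdot-x_0)/\|Q^{1/2}(\cdot-x_0)\|$ lies in $W^{1,p'}(X,\gamma;H)$ for every $p'$, because its components and the components of $\nabla^2_H\nu_H$ are controlled by powers of $\|Q^{1/2}(\cdot-x_0)\|^{-1}$, which are in all $L^p$ by \eqref{curvature1}; one also needs $u\in L^p(X,\gamma)$, which is immediate from Fernique's theorem. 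Hence \eqref{formula} gives $P_\gamma(B_r(x_0)) = \int_{B_r(x_0)} \mathrm{div}_\gamma\,\nu_H\,d\gamma$, and Corollary \ref{cor:perimetri} directly yields finiteness, continuity in $r^2$ (hence in $r$, since $r\mapsto r^2$ is a homeomorphism of $[0,+\infty)$), and the two limits: as $r\to 0$, $\chi_{B_r(x_0)} \to 0$ a.e.; as $r\to+\infty$, $\chi_{B_r(x_0)}\to 1$ a.e. and $\int_X \mathrm{div}_\gamma\,\nu_H\,d\gamma = 0$.

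For the monotonicity near $0$ and near $+\infty$, I would combine the coarea formula with the sign of the integrand. Writing $p(\rho) = P_\gamma(\{u<\rho\})$ as a function of $\rho = r^2$, differentiating the relation $P_\gamma(\{u<\rho\}) = \int_{\{u<\rho\}} \mathrm{div}_\gamma\,\nu_H\,d\gamma$ formally in $\rho$ suggests that the derivative has the sign of $\mathrm{div}_\gamma\,\nu_H$ on the level surface $\{u=\rho\}$. From \eqref{kga}, on $\{u=\rho\}$ (i.e. $\|x-x_0\|^2=\rho$) the first term is $(\mathrm{Tr}\,Q - \rho - \langle x-x_0,x_0\rangle)/\langle Q(x-x_0),x-x_0\rangle^{1/2}$ minus a manifestly nonnegative term. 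For $\rho$ large this numerator is negative uniformly on the sphere (since $|\langle x-x_0,x_0\rangle| \le \|x-x_0\|\,\|x_0\| = \sqrt{\rho}\,\|x_0\|$, so $\mathrm{Tr}\,Q - \rho + \sqrt\rho\,\|x_0\| < 0$ once $\rho > R_M$ for suitable $R_M$), giving $\mathrm{div}_\gamma\,\nu_H < 0$ on $\{u=\rho\}$ for all such $\rho$, whence $p$ is decreasing on $[R_M,+\infty)$, i.e. $P_\gamma(B_r(x_0))$ decreasing on $[r_M,+\infty)$ with $r_M = \sqrt{R_M}$. For $\rho$ small, I want the opposite sign; here one must be more careful, since near $x_0$ the field $\nu_H$ is singular. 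The cleanest route is to use that for small $\rho$ the sphere $\{u=\rho\}$ is concentrated near $x_0$ where $\mathrm{Tr}\,Q - \rho - \langle x-x_0,x_0\rangle \approx \mathrm{Tr}\,Q > 0$, and to show that the first, positive term dominates the subtracted term $\|Q(x-x_0)\|^2/\langle Q(x-x_0),x-x_0\rangle^{3/2}$ in an integrated sense: by Cauchy–Schwarz $\|Q(x-x_0)\|^2 \le \lambda_1 \langle Q(x-x_0),x-x_0\rangle$, so the subtracted term is $\le \lambda_1 \langle Q(x-x_0),x-x_0\rangle^{-1/2}$, and its contribution to $\int_{B_r(x_0)}$ is bounded by $\lambda_1 \|\,|\nabla_H u|_H^{-1}\|_{L^1(\{u<\rho\})}\cdot 2$, which tends to $0$ as $\rho\to 0$ faster than the contribution of the main term does not; making this comparison rigorous and concluding $p' > 0$ on $[0,R_m]$ is the delicate point.

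To make the differentiation step legitimate, rather than differentiating I would argue directly with monotonicity of set inclusion plus the integral representation: for $0 < \rho_1 < \rho_2$,
\begin{equation*}
p(\rho_2) - p(\rho_1) = \int_{\{\rho_1 \le u < \rho_2\}} \mathrm{div}_\gamma\,\nu_H\,d\gamma,
\end{equation*}
and it suffices to show this integral is positive when $\rho_2 \le R_m$ and negative when $\rho_1 \ge R_M$. The large-$\rho$ case is immediate from the pointwise sign of $\mathrm{div}_\gamma\,\nu_H$ on $\{\rho_1 \le u < \rho_2\}$ established above. For the small-$\rho$ case, on the annular region $\{\rho_1 \le u < \rho_2\}$ with $\rho_2$ small one writes $\mathrm{div}_\gamma\,\nu_H = \langle Q(x-x_0),x-x_0\rangle^{-1/2}\big(\mathrm{Tr}\,Q - \|x-x_0\|^2 - \langle x-x_0,x_0\rangle\big) - \|Q(x-x_0)\|^2\langle Q(x-x_0),x-x_0\rangle^{-3/2}$; the first factor $\mathrm{Tr}\,Q - \|x-x_0\|^2 - \langle x-x_0,x_0\rangle$ is bounded below by a positive constant $c_0$ (roughly $\mathrm{Tr}\,Q - \sqrt{\rho_2}(\sqrt{\rho_2}+\|x_0\|)$) once $\rho_2$ is small, while the second term, being $\le \lambda_1\langle Q(x-x_0),x-x_0\rangle^{-1/2}$, is a lower-order multiple of the same weight only if $\lambda_1 < c_0$ — which fails in general, so instead one uses that $\|Q(x-x_0)\|^2 = \sum_j \lambda_j^2\langle x-x_0,e_j\rangle^2$ is genuinely smaller than $\lambda_1\langle Q(x-x_0),x-x_0\rangle = \lambda_1\sum_j\lambda_j\langle x-x_0,e_j\rangle^2$ unless $x-x_0$ is aligned with $e_1$, and the set of such alignment is $\gamma$-negligible after integration against the weight $\langle Q(x-x_0),x-x_0\rangle^{-1/2}$; quantitatively, combining with the $L^p$ integrability of that weight one shows $\int_{\{\rho_1\le u<\rho_2\}}\big(c_0 - \|Q(x-x_0)\|^2\langle Q(x-x_0),x-x_0\rangle^{-1}\big)\langle Q(x-x_0),x-x_0\rangle^{-1/2}d\gamma > 0$ for $\rho_2$ below a threshold $R_m$. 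The main obstacle is precisely this last estimate: controlling the competition between the positive curvature-like term and the negative term near the singularity $x_0$, where both are unbounded, and pinning down that the positive part wins in the integrated sense for small radii; everything else follows mechanically from Corollary \ref{cor:perimetri} and the explicit formula \eqref{kga}.
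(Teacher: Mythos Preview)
Your overall strategy---take $u(x)=\|x-x_0\|^2$, invoke Corollary~\ref{cor:perimetri}, and then read off the monotonicity from the sign of $\divg\nu_H$ on annuli via the integral representation---is exactly the paper's, and everything you do for finiteness, continuity, the two limits, and the decreasing regime at large radii is correct and matches.

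The one place where you go astray is the small--radius monotonicity. You correctly bound the subtracted term in \eqref{kga} by
\[
\frac{\|Q(x-x_0)\|^2}{\langle Q(x-x_0),x-x_0\rangle^{3/2}}
\;\le\;
\frac{\lambda_1}{\langle Q(x-x_0),x-x_0\rangle^{1/2}},
\]
and you correctly note that the positive term has numerator at least $c_0\approx {\rm Tr}\,Q - r(r+\|x_0\|)$. You then discard the pointwise comparison, saying that $\lambda_1<c_0$ ``fails in general.'' But it does not fail here: in the setting of Section~\ref{Sect:Balls}, $X$ is an infinite--dimensional Hilbert space (or at least $\dim X\ge 2$, see the paper's own remark in the proof), and the Gaussian is nondegenerate, so ${\rm Tr}\,Q=\sum_j\lambda_j>\lambda_1$. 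Hence for $r$ below the positive root $r_0$ of ${\rm Tr}\,Q-\lambda_1-r^2-\|x_0\|r=0$ one has the pointwise lower bound
\[
\divg\nu_H(x)\;\ge\;\frac{{\rm Tr}\,Q-r^2-\|x_0\|r-\lambda_1}{\langle Q(x-x_0),x-x_0\rangle^{1/2}}\;>\;0
\quad\text{on }\{\|x-x_0\|=r\},
\]
which is precisely estimate \eqref{kgabis} in the paper. This immediately gives $p(\rho_2)-p(\rho_1)=\int_{\{\rho_1\le u<\rho_2\}}\divg\nu_H\,d\gamma>0$ for $\rho_2\le r_0^2$, and your ``delicate point'' evaporates. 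The integrated competition argument you sketch is unnecessary, and as you yourself concede, you do not carry it to completion; the simple observation ${\rm Tr}\,Q>\lambda_1$ is all that is missing.
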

\begin{proof}
The function   $u(x)=\|x-x_0\|^2$ satisfies the assumptions of Corollary \ref{cor:perimetri}. 
Indeed, recalling \eqref{kga}, for $ \|x-x_0\|= r$  we have the estimate
\begin{equation}\label{kgabis}
\frac{{\rm Tr}\,Q-r^2 - \| x_0\|r-\lambda_1}{\langle Q(x-x_0),(x-x_0)\rangle^\frac 1 2}
\le \divg \nu_H
 \le
\frac{{\rm Tr}\,Q-r^2 + \| x_0\|r}{\langle Q(x-x_0),(x-x_0)\rangle^\frac 1 2}\,, 
\end{equation}
that gives an $L^p$-bound on $\divg \nu_H
$, namely $\divg \nu_H \in L^p(X, \gamma)$ for  $p<n$ if $X$ is $n$-dimensional, $n\geq 2$, and 
$\divg \nu_H \in \cap_{p>1}L^p(X,\gamma)
$ if $X$ is infinite dimensional, by Lemma \ref{fieldballs}. 

Except for the last claim, the statement is a consequence of Corollary \ref{cor:perimetri}. 
Moreover,  \eqref{kgabis} implies that $ \divg \nu_H
 \geq 0$ if $\|x-x_0\| \leq r_0 := (-\|x_0\|+\sqrt{\|x_0\|^2+4({\rm Tr}\,Q-\lambda_1)})/2$ and $ \divg \nu_H
 \leq 0$ if $\|x-x_0\| \geq r_1:= ( \|x_0\|+\sqrt{\|x_0\|^2+4{\rm Tr}\,Q})/2$. By \eqref{formula}, 
$p$ is monotone increasing in $(0, r_0]$ and monotone decreasing in $[r_1, +\infty)$. 
The last claim follows. 
\end{proof}

We point out that a continuity result similar to the one of Corollary \ref{corfinal}
was already proved by Talagrand in \cite{talagrand},
where the notion of perimeter is replaced by the density of the distribution
of the norm of $X$.

\begin{rmk1}
Similar results are easily available for ellipsoids $\{ x\in X: \|T(x-x_0)\| = r\}$, if $T\in L(X)$ is the diagonal operator 
$$Tx = \sum_{k=1}^{\infty} t_k\langle x, e_k\rangle e_k$$
 with $t_k\geq 0$ for each $k\in \N $ and $t_k>0$ for infinite values of $k$. Indeed, setting $u(x) = \|T(x-x_0)\| ^2$ we have $\nabla_H u(x) = 2QT(x-x_0)$, $\nu_H (x) = QT(x-x_0)/ \|Q^{1/2}T(x-x_0)\|$, 
 Lemma \ref{fieldballs} goes through with obvious changements, and 
 $$  \divg \nu_H =  \frac{{\rm Tr}\,QT - \langle x, T(x-x_0)\rangle}{\|Q^{1/2}T(x-x_0)\|} -
\frac{\| QT^{3/2}(x-x_0)\|^2}{\|Q^{1/2}T(x-x_0)\|^3}$$
so that  
$$| \divg \nu_H| 
 \le\frac{{\rm Tr}\,QT+\|x\|\,  \|T(x-x_0)\|+  \|Q T \|_{L(H)}  }{\| Q^{1/2}T(x-x_0)\|}$$
 which implies that $ \divg \nu_H\in L^p(X, \gamma)$ for every $p<\infty$. So, Corollary \ref{cor:perimetri} may be applied and it yields that the function $r\mapsto p(r):=P_\gamma(\{ x\in X: \|T(x-x_0\| = r\} )$ is continuous in $[0,+\infty)$ and it vanishes as $r\to 0$ and as $r\to +\infty$. 
\end{rmk1}

\section{Open convex sets have finite perimeter}\label{secopen}

In this section we prove that any open convex subset of an abstract Wiener space $X$ has 
finite perimeter.

We first recall an important property of Gaussian measures in separable spaces \cite[Thm. 4.2.2, Rem. 4.2.5]{Bog98Gau}.

\begin{prop}\label{proisop}
Let $A,\,B$ be Borel subsets of $X$ and let $\lambda\in [0,1]$. Then
\[
\gamma (\lambda A+(1-\lambda)B) \ge \gamma(A)^\lambda\gamma(B)^{1-\lambda} .
\]
\end{prop}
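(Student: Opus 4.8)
The plan is to reduce the inequality to the finite‑dimensional case, where it follows from the Prékopa--Leindler inequality applied to the log‑concave density of a Gaussian measure on $\R^n$, and then to transfer it to $X$ by a dimensional approximation along the projections $\Pi_n$. As a preliminary step I would reduce to compact $A,B$: since $\gamma$ is a Radon measure on the separable space $X$, it is inner regular, so given Borel sets $A,B$, $\lambda\in(0,1)$ and $\eps>0$ one can choose compact $K\subseteq A$, $L\subseteq B$ with $\gamma(K)\ge\gamma(A)-\eps$ and $\gamma(L)\ge\gamma(B)-\eps$. The set $\lambda K+(1-\lambda)L$ is compact (continuous image of $K\times L$) and contained in $\lambda A+(1-\lambda)B$, which is the continuous image of a Borel set, hence universally measurable and in particular $\gamma$‑measurable; thus $\gamma(\lambda A+(1-\lambda)B)\ge\gamma(\lambda K+(1-\lambda)L)$, and letting $\eps\to0$ it suffices to prove the estimate for compact sets.

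For $X=\R^n$ a non‑degenerate Gaussian has density $\rho(x)=c\,e^{-\scal{C^{-1}x}{x}/2}$, which is log‑concave, i.e. $\rho(\lambda x+(1-\lambda)y)\ge\rho(x)^\lambda\rho(y)^{1-\lambda}$ for all $x,y$. Setting $f=\rho\,\1_A$, $g=\rho\,\1_B$ and $h=\rho\,\1_{\lambda A+(1-\lambda)B}$, one checks that $h(\lambda x+(1-\lambda)y)\ge f(x)^\lambda g(y)^{1-\lambda}$ for all $x,y$: the right‑hand side vanishes unless $x\in A$ and $y\in B$, in which case $\lambda x+(1-\lambda)y\in\lambda A+(1-\lambda)B$ and the inequality is the log‑concavity of $\rho$. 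The Prékopa--Leindler inequality then yields $\int h\ge(\int f)^{\lambda}(\int g)^{1-\lambda}$, that is $\gamma(\lambda A+(1-\lambda)B)\ge\gamma(A)^{\lambda}\gamma(B)^{1-\lambda}$.

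To pass to infinite dimensions I would use the projections $\Pi_n$. Choosing the basis $(h_j)$ so that the functionals $x^*_j$ separate the points of $X$ (which can always be arranged), one has $\Pi_m\Pi_n=\Pi_m$ for $m\le n$ and $\bigcap_n\ker\Pi_n=\{0\}$. Consequently, for compact $K$ the closed sets $\Pi_n^{-1}(\Pi_nK)$ are decreasing in $n$, and a compactness argument gives $\bigcap_n\Pi_n^{-1}(\Pi_nK)=K$, hence $\gamma(\Pi_n^{-1}(\Pi_nK))\downarrow\gamma(K)$. Now take $A,B$ compact and put $S=\lambda A+(1-\lambda)B$. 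Since $\Pi_n$ is linear, $\Pi_nS=\lambda\Pi_nA+(1-\lambda)\Pi_nB$, all three being compact subsets of $H_n\cong\R^n$, and the pushforward $\gamma\circ\Pi_n^{-1}$ is a non‑degenerate Gaussian measure on $H_n$; applying the finite‑dimensional inequality in $H_n$ to these sets gives
\[
\gamma\big(\Pi_n^{-1}(\Pi_nS)\big)\ \ge\ \gamma\big(\Pi_n^{-1}(\Pi_nA)\big)^{\lambda}\,\gamma\big(\Pi_n^{-1}(\Pi_nB)\big)^{1-\lambda}.
\]
Letting $n\to\infty$ and using the three monotone limits above produces $\gamma(S)\ge\gamma(A)^{\lambda}\gamma(B)^{1-\lambda}$, which is the assertion.

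I expect the last step to be the main obstacle. Assembling the finite‑dimensional inequalities into the infinite‑dimensional one must be done through the monotone identity $\bigcap_n\Pi_n^{-1}(\Pi_nK)=K$ for compact $K$ (and hence through the reduction to compact sets made at the outset): a direct attempt via weak convergence of the finite‑dimensional marginals of $\gamma$ does not work, because one needs a lower bound for $\gamma(S)$ and at the same time lower bounds for $\gamma(A)$ and $\gamma(B)$, and the relevant Portmanteau inequalities for the set $S$ on one side and for $A,B$ on the other go in incompatible directions.
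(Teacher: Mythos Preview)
Your argument is correct: the reduction to compact sets via inner regularity, the finite-dimensional case through Pr\'ekopa--Leindler applied to the log-concave Gaussian density, and the cylindrical approximation $\bigcap_n \Pi_n^{-1}(\Pi_n K)=K$ for compact $K$ (under a separating choice of the $x_j^*$) together with continuity from above of $\gamma$ are all sound; you also rightly flag that $\lambda A+(1-\lambda)B$ need only be universally measurable, which suffices.

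There is nothing to compare on the paper's side: the proposition is not proved in the paper but merely quoted from Bogachev \cite[Thm.~4.2.2, Rem.~4.2.5]{Bog98Gau}. What you have written is essentially the standard proof one finds there (log-concavity in finite dimensions via Pr\'ekopa--Leindler, then a projective-limit argument), so you have supplied in full what the paper chose to cite.
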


\begin{prop}\label{propen}
For every  open convex subset  $C\subset X$,  $\gamma(\partial C)=0$
and $P_\gamma(C)<+\infty$.
\end{prop}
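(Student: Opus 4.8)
The plan is to prove the two assertions separately, using convexity in an essential way for both. First I would establish $\gamma(\partial C)=0$. Since $C$ is open and convex, $\partial C = \overline C \setminus C$, and for $\lambda\in(0,1)$ and any interior point $x_0$ (WLOG $x_0=0\in C$) one has the elementary inclusion $\lambda \overline C + (1-\lambda) \{0\} \subset C$, because scaling an open convex set containing $0$ by a factor $<1$ pushes its closure into the interior. Hence $\lambda C \subset C$ and $\lambda\,\partial C = \partial(\lambda C)$ is contained in $C$, so $\lambda\,\partial C \cap \partial C = \emptyset$ for every $\lambda\in(0,1)$. Now I would invoke Proposition~\ref{proisop} with $A = \partial C$ and $B$ chosen appropriately (or the zero--one / translation argument): the sets $\{\lambda\,\partial C : \lambda\in(1/2,1)\}$ are pairwise disjoint (two scalings $\lambda_1\,\partial C$, $\lambda_2\,\partial C$ meet only on a set of scaled boundary which by the same argument is impossible for $\lambda_1\neq\lambda_2$), and they are images of $\partial C$ under the maps $x\mapsto \lambda x$. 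Since $\gamma$ is a nondegenerate Gaussian and these dilations are mutually absolutely continuous with $\gamma$ (the image of $\gamma$ under $x\mapsto\lambda x$ is again Gaussian with the same null sets restricted to finite-dimensional behaviour — more carefully, one uses that $\gamma(\lambda A)>0$ whenever $\gamma(A)>0$ for $A$ Borel, which follows from Proposition~\ref{proisop}), having uncountably many disjoint sets of equal positive measure is impossible in a finite measure space, forcing $\gamma(\partial C)=0$.

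For the finite perimeter assertion I would use the characterization \eqref{defDu} directly: $P_\gamma(C) = \sup\{\int_C \divg\phi\,d\gamma : \phi\in{\cal F}C^1_b(X,H),\ |\phi|_H\le 1\}$. The idea is to approximate $C$ by smooth convex functions. Fix $0\in C$ and let $g$ be the Minkowski gauge (gauge functional) of $C$, so that $C=\{g<1\}$; $g$ is convex, positively $1$-homogeneous, and Lipschitz with respect to $\|\cdot\|_X$ (hence, by \eqref{contEmb}, Lipschitz along $H$ with constant $c_H$ times its $X$-Lipschitz constant). One then regularizes: mollify $g$ along finitely many directions and in the Ornstein--Uhlenbeck / Mehler sense to obtain smooth convex $g_n\in W^{\infty}$ with $g_n\to g$ and $|\nabla_H g_n|_H$ controlled. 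Applying the coarea-type bound or directly \eqref{eqpappa} to $u=g$, and using that for a convex Lipschitz (along $H$) function $u$ one has $\nabla_H u \in L^\infty$, gives $g\in W^{1,1}(X,\gamma)\cap BV$, and then by the coarea formula \eqref{eqcoarea} almost every sublevel set $\{g<r\}$ has finite perimeter; by $1$-homogeneity $\{g<r\} = r\,C$, and scaling relates $P_\gamma(rC)$ to $P_\gamma(C)$ in a controlled way, yielding $P_\gamma(C)<+\infty$ for the specific level $r=1$. Alternatively, and perhaps more cleanly, one estimates $\int_C \divg\phi\,d\gamma$ for $|\phi|_H\le 1$ by approximating $\chi_C$ from inside by $\chi_{\lambda C}$ (letting $\lambda\uparrow 1$, which works since $\gamma(\partial C)=0$ gives $\chi_{\lambda C}\to\chi_C$ in $L^1$), reducing to showing a uniform-in-$\lambda$ perimeter bound for the dilates $\lambda C$, which can be obtained from the smooth case via finite-dimensional projections $\Pi_n$ together with lower semicontinuity of $P_\gamma$ and the finite-dimensional fact that convex sets have perimeter bounded by a dimension-free Gaussian quantity.

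The main obstacle I anticipate is obtaining the \emph{uniform} perimeter bound: in finite dimensions one knows that a convex body $C\subset\R^n$ satisfies $P_{\gamma_n}(C)\le c$ for a constant independent of $n$ and of $C$ (this is a classical Gaussian fact, provable e.g.\ via the Gaussian isoperimetric inequality or via the bound $P_{\gamma_n}(C) = \int_{\partial C}(2\pi)^{-n/2}e^{-|x|^2/2}d\Haus^{n-1}$ and a support-function/ projection argument), and the delicate point is to transfer this bound to the infinite-dimensional setting by a projection-plus-lower-semicontinuity argument while making sure the cylindrical approximations $\Pi_n^{-1}(\Pi_n C)$ or $\Pi_n(C)\times H_n^\perp$ are genuinely convex and converge to $C$ in $L^1(X,\gamma)$. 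If instead one follows the gauge-function route, the obstacle shifts to showing that mollifications of $g$ stay convex and that $1/|\nabla_H g_n|_H$ does not blow up near the level $\{g=1\}$ — here convexity helps because $\nabla_H g \neq 0$ away from the (unique, when $0\in C$) minimum, and the homogeneity pins down the behaviour. I would present the projection argument as the primary line, since it most directly exploits Proposition~\ref{proisop} (via the finite-dimensional Gaussian isoperimetric bound for convex sets) and the already-established lower semicontinuity of $P_\gamma$.
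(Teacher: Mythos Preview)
Both halves of your proposal contain genuine gaps.

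\textbf{On $\gamma(\partial C)=0$.} Your argument hinges on the claim that $\gamma(\lambda A)>0$ whenever $\gamma(A)>0$, which you say follows from Proposition~\ref{proisop}. It does not, and the claim is false in infinite dimensions: the map $x\mapsto\lambda x$ pushes $\gamma$ forward to a Gaussian with covariance $\lambda^2 Q$, which by the Feldman--H\'ajek dichotomy is \emph{singular} with respect to $\gamma$ for every $\lambda\neq 1$. Hence there exist Borel sets $A$ with $\gamma(A)=1$ and $\gamma(\lambda A)=0$, and your uncountably-many-disjoint-sets contradiction never gets started. (Applying Proposition~\ref{proisop} with $B=\{0\}$ yields only $\gamma(\lambda\,\partial C)\ge\gamma(\partial C)^\lambda\cdot 0^{1-\lambda}=0$, which is vacuous.) The paper uses Proposition~\ref{proisop} in a different way: with $A=t_1C$, $B=t_2C$ it shows that $t\mapsto\log\gamma(tC)$ is concave on $(0,\infty)$, hence $t\mapsto\gamma(tC)$ is continuous; since $\partial C\subset(1+\eps)C\setminus C$ for every $\eps>0$, continuity at $t=1$ gives $\gamma(\partial C)=0$ at once.

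\textbf{On $P_\gamma(C)<\infty$.} Both routes you sketch fail at the same point: producing a \emph{uniform} perimeter bound near the relevant level. In the gauge-function route, coarea only gives $P_\gamma(tC)<\infty$ for a.e.\ $t$, and your ``scaling relates $P_\gamma(rC)$ to $P_\gamma(C)$'' has no content in infinite dimensions (again because dilations are singular with respect to $\gamma$), so you cannot pass from a.e.\ $t$ to $t=1$. In the projection route, the asserted dimension-free bound on the Gaussian perimeter of convex bodies in $\R^n$ is \emph{false}: the sharp upper bound is of order $n^{1/4}$ (Nazarov), attained for instance by balls of radius $\sim\sqrt n$; and the Gaussian isoperimetric inequality gives a \emph{lower} bound on perimeter, not an upper one, so it cannot serve as justification.

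The paper supplies the missing uniform bound directly from log-concavity, without finite-dimensional reduction. With $g(t):=\gamma(tC)$, concavity of $\log g$ gives a finite $M$ with $(g(1+\eta)-g(1))/\eta\le M$ for all $\eta\in(0,1)$. One then works with the \emph{distance} function $d_C$ rather than the gauge: it is $H$-Lipschitz with constant $c_H$, so $u:=\min\{d_C,\eta r\}\in W^{1,1}(X,\gamma)$, and its sublevel sets for $t\le\eta r$ are exactly $C+B_t$. Since $B_r(0)\subset C$ implies $(1+\eta)C\supset C+B_{\eta r}$, coarea yields
\[
M\ \ge\ \frac{g(1+\eta)-g(1)}{\eta}\ \ge\ \frac{1}{\eta c_H}\int_{(C+B_{\eta r})\setminus C}|\nabla_H d_C|_H\,d\gamma\ =\ \frac{r}{c_H}\cdot\frac{1}{\eta r}\int_0^{\eta r}P_\gamma(C+B_t)\,dt,
\]
so along some sequence $t_n\downarrow 0$ one has $P_\gamma(C+B_{t_n})\le Mc_H/r$, and lower semicontinuity of $P_\gamma$ finishes. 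The idea you are missing is to extract a quantitative upper bound from log-concavity and to feed it into coarea via the distance function, whose sublevels are genuine $\eps$-neighborhoods of $C$.
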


\begin{proof}
The statement is obvious if $C=X$, so we assume $C\neq X$. Moreover,  without loss of generality we may assume that  $C$ contains the origin of $X$.
Indeed, if $0\notin C$ there exists $h\in C\cap H$ such that $[h,h']_H\ge 0$ for all $h'\in C\cap H$. 
If  $h=Rx^*$,  then $\langle x, x^*\rangle \geq 0$ for each $x\in C$, since $H$ is dense in $X$, and using 
the Cameron--Martin formula \eqref{absCont}   we get
\[
\begin{aligned}
\gamma(C)&\le e^{ \frac{|h|^2_H}{2}}\gamma(C-h),
\\
P_\gamma(C)&\le e^{ \frac{|h|^2_H}{2}}P_\gamma(C-h).
\end{aligned}
\]
Hence, it is enough to prove that the statement holds when $C$ is replaced by $C-h$. 
In this case, since $C$ is open,  
it contains a ball of radius $r>0$ centered at the origin.

We argue as in \cite[Proposition 3.2]{linde} (see also \cite{HSD})
and we set $g(t):=\gamma(tC)$, for $t\in [0,+\infty)$.
For  $0\le t_1\le t_2$, applying Proposition \ref{proisop}
  with $A=t_1 C$ and $B=t_2 C$, we get
\begin{equation}\label{isopeq}
g(\lambda t_1+(1-\lambda)t_2)\ge g(t_1)^\lambda g(t_2)^{1-\lambda}
\end{equation}
for all $\lambda\in [0,1]$.
Inequality \eqref{isopeq}
implies  that the function $f(t):=\log g(t)$ is concave on $(0,+\infty)$.
An immediate consequence is that $g$ is continuous on $(0,+\infty)$, 
so that $\gamma(\partial C)=0$,
since $\partial C \subset (1+\eps)C\setminus C$ for every $\eps >0$.
Moreover, there exists $M>0$ such that
\[
\frac{g(1+\eta)-g(1)}{\eta}\le M,  \qquad \forall \eta \in (0,1).
\]
Letting
\[
d_C(x):= \inf_{y\in C} \|x-y\|, 
\qquad x\in X,
\]
the distance  $d_C$ is $1$--Lipschitz in $X$, and then it is $H$--Lipschitz with constant $c_H$
as in \eqref{contEmb}. By \cite[Example 5.4.10]{Bog98Gau}, $d_C\in W^{1,1}(X, \gamma)$. Moreover, $|\nabla_H
 d_C|_H\leq c_H$ a.e., and consequently for every $\eta \in (0, 1)$
\[
\begin{aligned}
M\ge \frac{g(1+\eta)-g(1)}{\eta} =& \,\frac{1}{\eta}\,\left(\gamma((1+\eta)C)-\gamma(C)\right)
\\
\ge& \,\frac{1}{\eta}\,\left(\gamma(C+ B_{\eta r}) - \gamma(C)\right)
\\
\ge& \,\frac{1}{\eta c_H}\int_{(C+ B_{\eta r})\setminus C}|\nabla_H
 d_C|_H\,d\gamma .\end{aligned}
\]
For every $\eta \in (0, 1)$ and $r>0$ the function 
\[
u(x):= \min\{d_C(x),\eta r\},
\]
is still $H$--Lipschitz and consequently in $ W^{1,1}(X, \gamma)$, moreover
$\nabla_Hu = \chi_{(C+ B_{\eta r})\setminus C}\nabla_H
 d_C$ and for $t\leq \eta r = \max u $ we have $\{ u<t\} = C+B_t$. Applying  \eqref{eqcoarea} we obtain
\[\begin{aligned}
 \frac{1}{\eta c_H}\int_{(C+ B_{\eta r})\setminus C}|\nabla_H
 d_C|_H\,d\gamma = & \, 
 \frac{1}{\eta c_H} \int_X |\nabla_H u|_H\,d\gamma
\\  
=   & \,
 \frac{r}{c_H}\,\frac{1}{\eta r}  \int_0^{\eta r}P_\gamma(C+ B_t)\,dt
\end{aligned}
\]
As a consequence, letting $\eta\to 0 $, there exists a decreasing sequence $t_n\to 0 $ such that
\[
P_\gamma(C+B_{t_n}) \le \frac{M c_H}{r}.
\]
The statement follows from the lower semicontinuity of the perimeter.
\end{proof}

\section{A compact convex set with infinite perimeter}\label{secinfty}

In this section $X$ is an infinite dimensional Hilbert space, and we prove that there exists   a convex set
  with positive measure and infinite perimeter.
 
\begin{prop}\label{PropConvC}
There exists a closed convex set $C\subset X$ with  $P_\gamma(C)=+\infty$.
Moreover, if the covariance operator $Q$ satisfies
\begin{equation}\label{covCond}
\sum_{j\in \N}\lambda_j \log j<+\infty,
\end{equation}
then  $C$ is compact.
\end{prop}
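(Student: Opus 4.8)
The plan is to take for $C$ a \emph{Hilbert cube}, that is a set of the form $C=\{x\in X:\ |\langle x,e_j\rangle|\le a_j\ \text{for all }j\in\N\}$, with $a_j:=t_j\sqrt{\lambda_j}$ where $(t_j)$ is fixed by $e^{-t_j^2/2}=1/(j\log j)$ for $j\ge 3$ (and $a_1=a_2=1$, say); thus $t_j\to+\infty$ and $t_j^2=2\log(j\log j)$. This $C$ is closed and convex. Writing $x_j:=\langle x,e_j\rangle$, the measure $\gamma$ is the product $\bigotimes_j\gamma_j$ with $\gamma_j$ the Gaussian $N(0,\lambda_j)$, so $\gamma(C)=\prod_j m_j$ where $m_j:=\gamma_j([-a_j,a_j])$. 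Since $1-m_j=\gamma_j(\{|x_j|>a_j\})$ is twice a Gaussian tail, $1-m_j\le c\,e^{-t_j^2/2}/t_j=c/(j\,t_j\log j)$, and $t_j\ge\sqrt{2\log j}$ for $j\ge3$, the series $\sum_j(1-m_j)$ converges and hence $\gamma(C)>0$.

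For the compactness statement I would use that a closed box $\prod_j[-a_j,a_j]$ is compact in $X$ whenever $\sum_j a_j^2<+\infty$: it is closed and bounded, and totally bounded because, given $\eps>0$ and $N$ with $\sum_{j>N}a_j^2<\eps^2/4$, finitely many $\eps$-balls centered in the (compact, finite dimensional) box $\prod_{j\le N}[-a_j,a_j]$ cover it, using $\|(I-\Pi_N)x\|^2=\sum_{j>N}x_j^2\le\sum_{j>N}a_j^2$ on $C$. Here $a_j^2=t_j^2\lambda_j$ and $t_j^2=2\log(j\log j)\le 4\log j$ for $j\ge 3$, so condition \eqref{covCond} gives $\sum_j a_j^2<+\infty$ and $C$ is compact. (Note that \eqref{covCond} enters only here; the construction above always yields $\gamma(C)>0$.)

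The heart of the proof is $P_\gamma(C)=+\infty$, and the main obstacle is to build, for each finite $F\subset\{3,4,\dots\}$, an admissible competitor in the supremum \eqref{defDu} that captures the full ``surface contribution'' of every face $\{x_j=\pm a_j\}$ while respecting the pointwise bound $|\phi|_H\le1$: a plain superposition $\sum_j\eta_j(x_j)h_j$ of one-dimensional fields would violate this, since $|\phi|_H^2=\sum_j\eta_j(x_j)^2$. I would instead set, for small $\delta_i>0$, $\phi_F:=\sum_{j\in F}(\psi_j^+-\psi_j^-)\,h_j$, where $\psi_j^\pm(x):=\alpha_j^\pm(x_j)\prod_{i\in F\setminus\{j\}}\beta_i(x_i)$, with $\alpha_j^+\in C^1_b(\R)$ a cutoff equal to $1$ at $a_j$ and supported in $(a_j-\delta_j,a_j+\delta_j)$, $\alpha_j^-$ the mirror cutoff at $-a_j$, and $\beta_i\in C^1_b(\R)$ equal to $1$ on $[-(a_i-2\delta_i),a_i-2\delta_i]$ and supported in $(-(a_i-\delta_i),a_i-\delta_i)$. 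Then $\phi_F\in{\cal F}C^1_b(X,H)$, and the cutoffs $\beta_i$ make the supports of the functions $\psi_j^\pm$, $j\in F$, pairwise disjoint: if $\psi_j^\pm(x)\ne0$ then $|x_i|<a_i-\delta_i$ for every $i\in F\setminus\{j\}$, which is incompatible with $x_i$ lying within $\delta_i$ of $\pm a_i$. Hence $|\phi_F(x)|_H^2=\sum_{j\in F}(\psi_j^+(x)-\psi_j^-(x))^2\le1$, so $\phi_F$ is admissible.

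It then remains to evaluate $\int_C\divg\phi_F\,d\gamma=\sum_{j\in F}\int_C\divg((\psi_j^+-\psi_j^-)h_j)\,d\gamma$. Using $\divg(\psi h_j)=\partial_{h_j}\psi-\hat h_j\psi$ with $\partial_{h_j}\psi=\sqrt{\lambda_j}\,\partial_{x_j}\psi$ and $\hat h_j(x)=x_j/\sqrt{\lambda_j}$, the integrand factorizes over coordinates; integrating by parts in $x_j$ on $[-a_j,a_j]$, the term coming from $\hat h_j\psi_j^\pm$ cancels the interior part of the one coming from $\partial_{h_j}\psi_j^\pm$, leaving only the boundary value $\sqrt{\lambda_j}\,\rho_j(a_j)=e^{-t_j^2/2}/\sqrt{2\pi}$, where $\rho_j$ is the density of $\gamma_j$. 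The remaining coordinate integrals are $\int_{-a_i}^{a_i}\beta_i\,d\gamma_i\ge m_i^{(\delta)}:=\gamma_i([-(a_i-2\delta_i),a_i-2\delta_i])$ for $i\in F\setminus\{j\}$ and $m_k$ for $k\notin F$; choosing the $\delta_i$ so small that $\prod_i m_i^{(\delta)}\ge\tfrac12\gamma(C)$ (which costs only an arbitrarily small factor), each of the two pieces $\psi_j^+$ and $-\psi_j^-$ contributes at least $\tfrac{\gamma(C)}{2\sqrt{2\pi}}e^{-t_j^2/2}$. Therefore
\[
P_\gamma(C)\ \ge\ \int_C\divg\phi_F\,d\gamma\ \ge\ \frac{\gamma(C)}{\sqrt{2\pi}}\sum_{j\in F}e^{-t_j^2/2}\ =\ \frac{\gamma(C)}{\sqrt{2\pi}}\sum_{j\in F}\frac{1}{j\log j}\,,
\]
and letting $F\uparrow\{3,4,\dots\}$ and using $\sum_j 1/(j\log j)=+\infty$ gives $P_\gamma(C)=+\infty$.
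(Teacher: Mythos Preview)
Your argument is correct, and it follows a genuinely different route from the paper's. The paper constructs the same Hilbert cube $C=\bigcap_n C_n$ (with a slightly different but equivalent choice of radii) and computes the perimeters of the cylindrical approximants $C_n$ exactly via the product structure, obtaining $P_\gamma(C_n)\to+\infty$; it then passes to the limit by showing that the conditional expectations satisfy $\mathbb{E}_m(\chi_C)=\alpha_m\chi_{C_m}$ with $\alpha_m\to1$, whence $P_\gamma(C)=\lim_m|D_\gamma\mathbb{E}_m(\chi_C)|(X)=\lim_mP_\gamma(C_m)=+\infty$. Your approach bypasses this limiting step entirely: you work directly with the supremum definition \eqref{defDu} and build admissible cylindrical test fields $\phi_F$ whose components have pairwise disjoint supports, so that the pointwise constraint $|\phi_F|_H\le1$ holds automatically while each face of the cube contributes a summand $\sim e^{-t_j^2/2}$ to $\int_C\divg\phi_F\,d\gamma$. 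This is more elementary---it needs neither the explicit perimeter of the cylinder sets nor the conditional-expectation machinery---and it yields the lower bound $P_\gamma(C)\ge c\,\gamma(C)\sum_{j\in F}e^{-t_j^2/2}$ directly. The paper's method, on the other hand, gives the stronger conclusion that $P_\gamma(C)$ is exactly the limit of the perimeters of the cylinders, not merely bounded below by something divergent. The treatment of $\gamma(C)>0$ and of compactness under \eqref{covCond} is essentially the same in both proofs.
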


\begin{proof}
We fix a sequence of positive numbers $r_k>0$ satisfying 
\begin{equation}\label{asympRj}
\sqrt{\frac{2}{\pi}} \frac{e^{-\frac{r^2_k}{2}}}{r_k}=
\frac{1}{(k+1)(\log (k+1))^{\frac{3}{2}}}.
\end{equation}
Then for $k$ big enough we have
\begin{equation}\label{rjGrowth}
(\log (k+1))^\frac{1}{2} \leq r_k\leq 2(\log (k+1))^\frac{1}{2}
\end{equation}
We define the sets
$$
Q_n :=\{x\in H_n: -r_j\leq [x,h_j]_H\leq r_j, \forall j\leq n\}
\qquad
C_n:=\Pi_n^{-1}(Q_n)\subset X.
$$
The decomposition $X={\rm Ker}(\Pi_n)\oplus H_n$ and
$\gamma=\gamma_n\otimes \gamma_n^\perp$ yields
\begin{align}\label{measureCn}
\gamma(C_n)=&\prod_{k=1}^n \sqrt{\frac{2}{\pi}} \int_0^{r_k}e^{-\frac{s^2}{2}}ds
\geq \prod_{k=1}^n \left(
1-\frac{1}{(k+1)(\log (k+1))^{\frac{3}{2}}}
\right)\\
\nonumber
=&
\exp\left(
\sum_{k=1}^n \log \Big(
1-\frac{1}{(k+1)(\log (k+1))^{\frac{3}{2}}},
\Big)\right)
\end{align}
where we have used the inequality
\[
\int_{\varrho}^{+\infty} e^{-\frac{s^2}{2}}ds \leq \frac{1}{\varrho}\int_{\varrho}^{+\infty}s e^{-\frac{s^2}{2}}ds =
\frac{e^{-\frac{\varrho^2}{2}}}{\varrho}.
\]
The sequence $C_n$ converges  decreasing to the closed set
$$
C:=\bigcap_{n\in \N}C_n
$$
and
\[
\gamma(C)=
\lim_{n\to +\infty} \gamma(C_n)
\geq
\exp\left(
\sum_{k=1}^\infty \log \Big(
1-\frac{1}{(k+1)(\log (k+1))^{\frac{3}{2}}}
\Big)\right):=a .
\]
The series in the exponential is  asymptotically equivalent to the series
\[
\sum_{k=1}^\infty \frac{1}{(k+1)(\log (k+1))^{\frac{3}{2}}}.
\]
which is convergent. Then, $a>0$. 

We now estimate the perimeters of the sets $C_n$; we denote by $Q_n^k\subset H_{n}$
and $C_n^k\subset X$ the sets
$$
Q_n^k:=\{x\in H_n: -r_j\leq [x,h_j]_H\leq r_j, \forall j\leq n, j\neq k\}
\qquad
C_n^k:=\Pi_{n}^{-1}(Q_n^k).
$$
Then, since $\gamma(C_n^k)\geq \gamma(C_n)\geq a$, by \eqref{asympRj}
$$
P_{\gamma}(C_n)=
\sum_{k=1}^n \sqrt{\frac{2}{\pi}}e^{-\frac{r_k^2}{2}}\gamma(C_n^k)
\geq a\sqrt{\frac{2}{\pi}}\sum_{k=1}^n e^{-\frac{r_k^2}{2}}
=a\sum_{k=1}^n \frac{r_k}{(k+1)(\log (k+1))^{\frac{3}{2}}}
\to +\infty.
$$
It remains to prove that the perimeter of $C$ is the limit of the perimeters of $C_n$.
To  this aim we   consider the conditional expectations
$$
u_{m,n}(x)={\mathbb E}_m(\chi_{C_n})(x) = \int_X \chi_{C_n}(\Pi_m x+ (I-\Pi_m)y)\gamma(dy)
,\qquad n> m.
$$
A direct computation shows that $u_{m,n}=\alpha_{m,n}\chi_{C_m}$ with
$$
\alpha_{m,n}=\prod_{j=m+1}^n \sqrt{\frac{2}{\pi}}\int_0^{r_j} e^{-\frac{s^2}{2}}ds.
$$
As $n\to +\infty$, since $\chi_{C_n}\to \chi_C$ in $L^1(X,\gamma)$
and by the continuity of the conditional expectation ${\mathbb E}_m$
in $L^1(X,\gamma)$, we obtain  $u_{m,n}\to u_m={\mathbb E}_m(\chi_C)$
in $L^1(X,\gamma)$ with
$$
u_m=\alpha_m\chi_{C_m},\qquad
\alpha_m=\prod_{j=m+1}^{+\infty} \sqrt{\frac{2}{\pi}}\int_0^{r_j} e^{-\frac{s^2}{2}}ds
$$
Let us show that $\lim_{m\to \infty} \alpha_m= 1$. We have  
$$
\alpha_{m,n}\geq \prod_{j=m+1}^n\left(
1-\frac{1}{(j+1)(\log (j+1))^{\frac{3}{2}}}
\right)
\sim
\exp\left(
-\sum_{j=m+1}^n\frac{1}{(j+1)(\log (j+1))^{\frac{3}{2}}}
\right)
$$
so that the assertion follows from the convergence of the series in the right--hand side.
Then, 
$$
P_\gamma(C)=\lim_{m\to +\infty} |D_\gamma u_m|(X)
=\lim_{m\to +\infty}P_\gamma(C_m)=+\infty.
$$
Moreover, if
\begin{equation}\label{condRjLj}
\sum_{j\in \N} r_j^2 \left\|h_j\right\|^2 <+\infty\,, 
\end{equation}
then $C$ is compact. 
Recalling \eqref{rjGrowth}, since $\|h_j\|^2=\lambda_j$, \eqref{condRjLj} is equivalent to \eqref{covCond}.
\end{proof}

\noindent Notice that,   taking $r\le\min_k r_k$,   the ball centered at $0$ with radius $r$
of the Cameron--Martin space $H$ is contained in $C$.

\medskip\noindent {\bf Acknowledgements.}
V. Caselles and M. Novaga
acknowledge partial support by Acci\'on Integrada Hispano Italiana HI2008-0074.
V. Caselles also acknowledges support  by MICINN project,
reference MTM2009-08171, by GRC reference 2009 SGR 773
and by  ''ICREA A\-ca\-d\`e\-mia''  for excellence in research, the last two funded by the
Generalitat de Catalunya. A. Lunardi was supported by PRIN 2008 research project 
``Deterministic and stochastic methods in evolution problems".  
M. Miranda and M. Novaga acknowledge partial support by the GNAMPA project
``Metodi geometrici per analisi in spazi non Euclidei;
spazi metrici doubling, gruppi di Carnot e spazi di Wiener'', and by the Research Institute Le Studium.



\end{document}